\documentclass{amsart}
\usepackage[english, frenchb]{babel}

\usepackage[fixlanguage]{babelbib}

\usepackage[T1]{fontenc}
\usepackage{amsmath}
\usepackage{amsfonts}
\usepackage{amssymb}

\usepackage{amsmath}
\usepackage{amssymb}
\usepackage{amscd}
\usepackage[usenames,dvipsnames]{xcolor}

\usepackage{txfonts}
\usepackage{hyperref}
\hypersetup{colorlinks=true,citecolor=NavyBlue,linkcolor=Brown,urlcolor=Orange}
\usepackage[all,ps,cmtip]{xy}
\usepackage{xspace}
\usepackage{mathrsfs}
\usepackage{mathtools}
\usepackage{extarrows}
\usepackage{ dsfont }
\usepackage[varg]{pxfonts}
\newtheorem{theorem}{Theorem} 
\newtheorem{thm}[theorem]{Th\'eor\`eme}
\newtheorem{cor}[theorem]{Corollaire}
\newtheorem{lem}[theorem]{Lemme}
\newtheorem{prop}[theorem]{Proposition}


\theoremstyle{definition}

\newtheorem{rem}[theorem]{Remarque}


\theoremstyle{remark}


\newcommand{\mbb}{\mathbb}
\newcommand{\QQ}{\mbb{Q}}
\newcommand{\NN}{\mbb{N}}
\newcommand{\ZZ}{\mbb{Z}}
\newcommand{\CC}{\mbb{C}}

\newcommand{\PP}{\mbb{P}}


\renewcommand{\tilde}{\widetilde}

\DeclareMathOperator\Alb{Alb}
\DeclareMathOperator\alb{alb}
\DeclareMathOperator\alg{alg}
\DeclareMathOperator\Br{Br} 
\DeclareMathOperator\CH{CH} 
\DeclareMathOperator\Chow{Chow} 
\DeclareMathOperator\cl{cl} 
\DeclareMathOperator\codim{codim} 
\DeclareMathOperator\Griff{Griff}  
\DeclareMathOperator\ho{hom}
\DeclareMathOperator\HH{\mathscr{H}}
\DeclareMathOperator\nr{nr}
\DeclareMathOperator\Pic{Pic}
\DeclareMathOperator\tor{tor}
\DeclareMathOperator\Zar{Zar}

\newcommand{\newabstract}[1]{%
  \par\bigskip
  \csname otherlanguage*\endcsname{#1}%
  \csname captions#1\endcsname
  \item[\hskip\labelsep\scshape\abstractname.]
}



\title
[$2$--Cycles sur les hypersurfaces cubiques de dimension $5$]{$2$--Cycles sur les hypersurfaces cubiques de dimension $5$ }

\author{Lie Fu et Zhiyu Tian}
\address{Institut Camille Jordan UMR 5208\\ Universit\'e Claude Bernard Lyon 1\\ 69622 Villeurbanne Cedex, France}
\email{fu@math.univ-lyon1.fr}
\address{
Centre International de la Recherche Math\'ematique de P\'ekin\\
P\'ekin Universit\'e\\
100871, P\'ekin, Chine}
\email{zhiyutian@bicmr.pku.edu.cn}

\date{\today}
\thanks{Lie Fu et Zhiyu Tian sont soutenus par le projet HodgeFun (ANR-16-CE40-0011) et par ``Projet Inter-laboratoire 2017'' financ\'e par F\'ed\'eration de Recherche en Math\'ematiques Rh\^one-Alpes/Auvergne CNRS 3490. Lie Fu est aussi soutenu par le projet ECOVA (ANR-15-CE40-0002) et LABEX MILYON (ANR-10-LABEX-0070). Zhiyu Tian est aussi soutenu par les financements ``Recruitment of Global Experts'', NSFC $\text{N}^\circ$11871155 et $\text{N}^\circ$11831013. }

\begin{document}


\begin{abstract}
On \'etudie les cycles alg\'ebriques de codimension $3$ sur les hypersurfaces cubiques lisses de dimension $5$. Pour une telle hypersurface, on d\'emontre d'une part que son groupe de Griffiths des cycles de codimension $3$ est trivial et d'autre part que l'application d'Abel--Jacobi induit un isomorphisme entre son groupe de Chow des cycles de codimension $3$ alg\'ebriquement equivalents \`a z\'ero et sa jacobienne interm\'ediaire.

\newabstract{english}
We study 2-cycles of a smooth cubic hypersurface of dimension $5$. We show that the Griffiths group of 2-cycles is trivial and the Abel--Jacobi map induces an isomorphism between the Chow group of algebraically trivial $2$-cycles and the intermediate Jacobian.
\end{abstract}


\maketitle



\section{Introduction}\label{sect:Intro}
On s'int\'eresse dans cet article \`a l'\'etude des cycles alg\'ebriques des hypersurfaces cubiques (lisses) de dimension 5 d\'efinie sur le corps $\CC$ des nombres complexes. L'objet central est leurs \emph{anneaux de Chow}, qui consistent en des combinaisons lin\'eaires formelles de sous-sch\'emas ferm\'es int\`egres modulo l'\'equivalence rationnelle, munis d'un produit d'intersection bien d\'efini \cite{MR1644323}. 

Soit $X\subset \PP_{\CC}^{6}$ une telle hypersurface. Ses groupes de Chow sont bien compris sauf en codimension 3\,:
\begin{itemize}
\item $\CH^{0}(X)\cong \ZZ$, engendr\'e par la classe fondamentale de $X$.
\item $\CH^{1}(X)\cong \ZZ$, engendr\'e par la classe de section hyperplane $h:=c_{1}\left(\mathcal{O}_{X}(1)\right)$, gr\^ace au th\'eor\`eme classique de Lefschetz sur les sections hyperplanes.
\item $\CH^{2}(X)\cong \ZZ$, engendr\'e par $h^{2}$. Ceci r\'esulte de la d\'ecomposition de la diagonale de Bloch--Srinivas \cite{MR714776}. Donnons quelques d\'etails\,: $X$ \'etant rationnellement connexe, Th\'eor\`eme 1 \emph{loc.~cit.}~implique que $\CH^{2}(X)_{\hom}=\CH^{2}(X)_{\alg}$ et que le dernier est faiblement repr\'esentable. Comme $H^{3}(X, \ZZ)$ est nul, par \cite[Theorem 1.9]{MR805336}, le repr\'esentant alg\'ebrique de $\CH^{2}(X)_{\alg}$ est z\'ero. Par cons\'equent, $\CH^{2}(X)$ est isomorphe \`a son image par l'application classe de cycle.
 
\item $\CH^{4}(X)\cong \ZZ$, engendr\'e par la classe d'une droite projective contenue dans $X$. D'une part, par \cite{MR1283872}, le groupe de Chow des courbes de $X$ est engendr\'e par les droites\,; d'autre part, toutes les droites sont rationnellement \'equivalentes car la vari\'et\'e de Fano des droites de $X$ est de Fano et donc rationnellement connexe.
\item $\CH^{5}(X)\cong \ZZ$, engendr\'e par la classe d'un point de $X$, car $X$ est unirationnelle.\end{itemize}

Dans le travail pr\'esent, on compl\`ete la liste ci-dessus par le calcul de $\CH^{3}(X)$. Rappelons que l'on dispose d'une filtration $$\CH^{3}(X)\supset \CH^{3}(X)_{\ho}\supset \CH^{3}(X)_{\alg}\supset 0$$
par les sous-groupes des cycles alg\'ebriques \emph{homologiquement, resp. alg\'ebriquement} triviaux (\emph{cf.} \cite{MR2115000}). Regardons les trois gradu\'es associ\'es \`a cette filtration\,:

\begin{itemize}
\item Le quotient $\CH^{3}(X)/\CH^{3}(X)_{\ho}$ est isomorphe, \emph{via} l'application classe, \`a $H^{6}(X,\ZZ)\cong \ZZ$, qui est engendr\'e par la classe d'un plan projectif contenu dans $X$.
\item Le quotient $\CH^{3}(X)_{\ho}/\CH^{3}(X)_{\alg}$ est par d\'efinition le troisi\`eme \emph{groupe de Griffiths} \cite[\S 20.3]{MR1988456}, not\'e $\Griff^{3}(X)$.
\item Pour comprendre $\CH^{3}(X)_{\alg}$, on consid\`ere l'application d'Abel--Jacobi (\emph{cf.} \cite[\S 12.1]{MR1988456})$$\Phi\colon\CH^{3}(X)_{\alg}\to J^{5}(X),$$ o\`u $J^{5}(X):=\frac{H^5(X, \CC)}{F^3H^{5}(X)\oplus H^5(X, \ZZ)}$ est la \emph{jacobienne interm\'ediaire} de $X$, qui est une vari\'et\'e ab\'elienne principalement polaris\'ee de dimension 21.
\end{itemize}

\bigskip
Notre r\'esultat principal est d'identifier les deux derniers gradu\'es et de calculer le groupe $\CH^{3}(X)$\,:

\begin{thm}\label{main}
Soit $X$ une hypersurface cubique lisse de dimension $5$ d\'efinie sur le corps des nombres complexes $\CC$. Alors
\begin{enumerate}
\item $\Griff^3(X)=0$, i.e. $\CH^{3}(X)_{\ho}=\CH^{3}(X)_{\alg}$.
\item L'application d'Abel--Jacobi induit un isomorphisme $\Phi\colon\CH^3(X)_{\alg} \xrightarrow{\cong} J^5(X)(\CC)$.
\end{enumerate}
Par cons\'equent, on a une suite exacte courte scind\'ee 
\begin{equation*}
0\to J^{5}(X)(\CC)\xrightarrow{\Phi^{-1}} \CH^{3}(X)\xrightarrow{\cl} \ZZ\to 0,
\end{equation*}
dont un scindage est fourni par la classe d'un plan projectif contenu dans $X$.
\end{thm}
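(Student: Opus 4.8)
The plan is to deduce both assertions from three inputs: the shape of the Hodge structure on $H^5(X)$, the classical theory of the cubic threefold applied to the linear sections $X\cap\PP^4$ of $X$, and a Bloch--Srinivas decomposition of the diagonal anchored by the vanishings $\CH_0(X)_\QQ=\QQ$ and $\CH_1(X)_\QQ=\QQ$ recorded in the introduction. First I would note, via Griffiths' identification of $F^\bullet H^5(X)$ with graded pieces of the Jacobian ring $R$ of the defining cubic, that $H^{4,1}(X)\cong R_{-1}=0$ while $H^{3,2}(X)\cong R_2$ has dimension $21$; hence $H^5(X,\QQ)$ is a polarized Hodge structure of coniveau $\ge 2$ (\emph{of surface type}), in agreement with the stated fact that $J^5(X)$ is a principally polarized abelian variety of dimension $21$. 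This is the structural reason to expect $\CH^3(X)_{\hom}$ to be governed by the one-cycles of an algebraic family of surfaces, and steps (2)--(3) below are meant to make this effective.

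\emph{(2) Surjectivity of $\Phi$.} I would introduce the incidence variety $\mathcal V\subset X\times U$ of pairs $(x,[\PP^4])$ with $x\in X\cap\PP^4$, over a suitable base $U$ of linear $\PP^4$'s, with the two projections $g\colon\mathcal V\to U$ (whose smooth fibres $V_u=X\cap\PP^4_u$ are cubic threefolds) and the \emph{sweeping} map $p\colon\mathcal V\to X$. On each $V_u$, the Clemens--Griffiths theorem, with the Bloch--Murre refinement, gives $\CH^2(V_u)_{\alg}\xrightarrow{\ \sim\ }J^3(V_u)$, with $J^3(V_u)$ generated by differences of lines. A family of such one-cycles over a curve $C\subset U$ sweeps, via $p$, to an algebraically trivial two-cycle on $X$ whose Abel--Jacobi class is read off from the associated normal function; using Deligne's global invariant cycle theorem for $g$ together with the injectivity of $p^{*}\colon H^{5}(X,\QQ)\hookrightarrow H^{5}(\mathcal V,\QQ)$, one checks that these classes span $J^5(X)$. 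Concretely one may take $U$ to be the $\PP^2$ of $\PP^4$'s containing a fixed $\PP^3\subset X$, so that $\mathcal V$ is (a blow-up of) $X$ fibred in cubic threefolds over $\PP^2$, and show that the normal-function group $H^{2}(\PP^2,R^{3}g_{*}\QQ)$ is exhausted by Abel--Jacobi classes of algebraic cycles, fibrewise surjectivity being the input.

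\emph{(3) Injectivity of $\Phi$ and vanishing of $\Griff^3(X)$.} Since $\CH_0(X)_\QQ=\QQ$ and $\CH_1(X)_\QQ=\QQ$, the Bloch--Srinivas method, in the refined form of Paranjape, yields (after replacing $\Delta_X$ by a nonzero integer multiple) a decomposition $\Delta_X=\Gamma_0+\Gamma_1+\Gamma_2$ in $\CH^5(X\times X)_\QQ$ with $\Gamma_0$ supported on $X\times\{\mathrm{pt}\}$, $\Gamma_1$ on $X\times D$ for a divisor $D$, and $\Gamma_2$ a projector onto the $H^5$-summand of the motive; by the effective coniveau-$2$ statement underlying step (2), this last projector factors, at the level of correspondences, through $H^1$ of a surface $\tilde{S}$, so that $\Gamma_{2*}$ on $\CH^3(X)_{\hom}$ factors through $\Alb(\tilde{S})$ (in particular no Mumford-type obstruction intervenes). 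Acting on a homologically trivial two-cycle $z$, the pieces $\Gamma_{0*}z$ and $\Gamma_{1*}z$ carry no continuous moduli in $\CH^3$, so with $\QQ$-coefficients $z=\Gamma_{2*}z$ lies in the image of $\Alb(\tilde{S})$; combined with the Abel--Jacobi picture this forces $\Griff^3(X)_\QQ=0$ and the injectivity of $\Phi_\QQ$, hence, with step (2), that $\Phi_\QQ$ is an isomorphism. A complementary way to see injectivity is to spread $z$ over the family of hyperplane sections $Y_t=X\cap H_t$: each $Y_t$ is a cubic fourfold with $\CH^3(Y_t)_{\hom}=0$, so $z|_{Y_t}=0$ for every smooth $t$, and a localization/descent argument confines $z$ to the locus governed by the classical cubic-threefold picture.

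Finally I would upgrade to $\ZZ$-coefficients: show that $\CH^3(X)_{\hom}$ is divisible and torsion-free — equivalently, that $\Phi$ is bijective on torsion (a Roitman-type statement for the algebraic representative of $\CH^3(X)_{\alg}$) and that $\Griff^3(X)$ has no torsion — so that the $\QQ$-isomorphism descends to $\Phi\colon\CH^3(X)_{\alg}\xrightarrow{\ \sim\ }J^5(X)(\CC)$ and, in particular, $\CH^3(X)_{\hom}=\CH^3(X)_{\alg}$. The short exact sequence then follows formally from $0\to\CH^3(X)_{\hom}\to\CH^3(X)\xrightarrow{\ \cl\ }H^6(X,\ZZ)\cong\ZZ\to 0$, split by sending $1$ to the class of a plane $\PP^2\subset X$. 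The two genuinely hard points, I expect, are: (a) the effectivity in step (2), i.e. converting ``$H^5(X)$ has coniveau $2$'' into an actual surjection onto $J^5(X)$ by algebraic two-cycles, where the reduction to the cubic threefold does the essential work; and (b) the integral refinement, i.e. excluding torsion in $\CH^3(X)_{\hom}$ and in $\Griff^3(X)$, which for a variety whose motive is \emph{trivial in low degrees} still requires an argument specific to cubics (through the Jacobian ring, a specialization, or unramified-cohomology input).
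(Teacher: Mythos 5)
Your skeleton — coniveau $2$ for $H^5(X)$, a refined diagonal decomposition anchored on $\CH_0(X)=\CH_1(X)=\ZZ$, reduction of $\CH^3(X)_{\alg}$ to the $H^1$ of an algebraic surface, and the formal splitting of the final exact sequence by the class of a plane — matches the paper's, and the rational statements you extract from it ($\Griff^3(X)\otimes\QQ=0$, $\Phi$ an isogeny) are correctly obtained this way. But the two points you yourself flag as ``genuinely hard'', namely the integral refinement, are precisely the content of the theorem (Mboro had already shown $\ker\Phi$ is killed by $18$), and your proposal does not prove them. The paper's route to the torsion-freeness of $\Griff^3(X)$ is: (i) an \emph{effective} coniveau-$2$ statement — for general $X$, Collino's theorem that the Albanese of the Fano \emph{surface of planes} maps isomorphically to $J^5(X)$, then a degeneration argument for arbitrary $X$ — which makes the edge map $H^2(X,\HH^3(\ZZ))\to H^5(X,\ZZ)$ surjective and hence identifies $\Griff^3(X)$ with $H^1(X,\HH^4(\ZZ))$ via Bloch--Ogus; (ii) the torsion of that group is $H^4_{\nr}(X,\QQ/\ZZ)$, which vanishes by Kahn--Sujatha applied to the quadric \emph{surface} fibration $\tilde X\to\PP^3$ obtained by projecting from a plane, because $\CC(\PP^3)$ has cohomological dimension $3$. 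You gesture at ``unramified-cohomology input'' but give no argument, and a ``Roitman-type statement'' for codimension-$3$ cycles is not available off the shelf — it is exactly what must be established. Likewise, integral injectivity of $\Phi$ is obtained in the paper by combining divisibility of $\CH^3(X)_{\alg}$ with the connectedness of the fibres of $J(C)\to J^5(X)$ for a curve $C$ of planes (general $X$), followed by a specialization/normal-function argument for arbitrary $X$; none of this appears in your proposal.

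Two further concrete problems. A smooth cubic fivefold contains no $\PP^3$ (a smooth hypersurface of degree $\geq 2$ in $\PP^6$ contains no linear subspace of dimension $>2$), so ``the $\PP^2$ of $\PP^4$'s containing a fixed $\PP^3\subset X$'' does not exist; the paper's effective coniveau statement is anchored on the planes contained in $X$, not on cubic-threefold linear sections, and your normal-function argument over that base would have to be rebuilt. And your ``complementary'' injectivity argument via hyperplane sections does not work: knowing $z|_{Y_t}=0$ in $\CH^3(Y_t)$ for every smooth hyperplane section $Y_t$ does not yield $z=0$ in $\CH^3(X)$ by any localization or descent — restriction to a divisor loses information and there is no exact sequence running in that direction.
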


Ren\'e Mboro \cite[Proposition 0.1 et Theorem 0.9]{mboro2017cubic} avait d\'ej\`a \'etabli que le noyau de $\Phi$ est annul\'e par 18.

\begin{rem}
La partie (1) du th\'eor\`eme \ref{main} est l'analogue du th\'eor\`eme de Bloch--Srinivas \cite[Theorem 1 (ii)]{MR714776} qui dit que pour une vari\'et\'e projective lisse complexe, si le groupe $\CH_{0}$ est support\'e sur un sous-ensemble alg\'ebrique de dimension 2, alors le deuxi\`eme groupe de Griffiths s'annule.

La partie (2) du th\'eor\`eme \ref{main} est l'analogue du th\'eor\`eme de Murre \cite{MR805336} (annonc\'e dans \cite{MR777590}),  bas\'e sur les r\'esultats de Merkurjev--Suslin en K-th\'eorie \cite{MR675529} et Bloch--Ogus sur la conjecture de Gersten \cite{MR0412191}, qui dit que pour une vari\'et\'e projective lisse complexe, si le groupe $\CH_{0}$ est support\'e sur un sous-ensemble alg\'ebrique de dimension 1, alors le groupe de Chow des cycles de codimension 2 alg\'ebriquement triviaux est isomorphe, par l'application d'Abel--Jacobi, \`a la jacobienne interm\'ediaire en degr\'e 3. 

 On remarque aussi que le r\'esultat ci-dessus pour $\CH^{3}(X)$ est compatible, en ignorant sa torsion, avec la conjecture de Bloch--Beilinson--Murre (\emph{cf.} \cite{Murre}, \cite[\S 23.2]{MR1988456}, \cite[\S 11.2]{MR2115000}), car les structures de Hodge $H^{i}(X, \QQ)$, pour $3\leq i\leq 6$, sont de coniveau de Hodge $\geq 2$.
\end{rem}

\bigskip

Un outil important utils\'e dans la preuve du th\'eor\`eme \ref{main} est la \emph{cohomologie non ramifi\'ee} (\emph{cf.} \S \ref{sect:Hnr}), qui est un invariant birationnel pour les vari\'et\'es projectives lisses. Avec l'int\'er\^et potentiel pour le probl\`eme de rationalit\'e des hypersurfaces cubiques de dimension 5, on peut se demander si les groupes de cohomologie non ramifi\'ee fournissent une vraie obstruction dans cette situation. Malheureusement, le r\'esultat suivant, qui est probablement connu par les experts, montre que toutes ces obstructions s'annulent\footnote{ On ignore aussi s'il existe une hypersurface cubique lisse de dimension 5 rationnelle.}. Pour un groupe ab\'elien $A$, notons par $\HH^{i}(A)$ le faisceau pour la topologie de Zariski associ\'e au pr\'efaisceau $U\mapsto H^{i}(U, A)$, o\`u l'ouvert de Zariski $U$ est munie de la topologique classique et $H^{i}(-, A)$ est la cohomologie singuli\`ere \`a coefficients dans $A$.

\begin{thm}\label{main2}
Soit $X$ une hypersurface cubique lisse de dimension 5 d\'efinie sur $\CC$. 
Alors
\begin{enumerate}
\item 
\begin{equation*}
H^{p}\left(X_{\Zar}, \HH^{q}(\ZZ)\right)\cong 
\begin{cases}
H^{2p}(X, \ZZ)\simeq \ZZ & \text{ si } 0\leq p=q\leq 5\\
H^{5}(X, \ZZ) \simeq \ZZ^{\oplus 42} & \text{ si } p=2 \text{ et } q=3\\
0 & \text{ sinon.}
\end{cases}
\end{equation*}
\item De mani\`ere similaire,
\begin{equation*}
H^{p}\left(X_{\Zar}, \HH^{q}(\QQ/\ZZ)\right)\cong 
\begin{cases}
H^{2p}(X, \QQ/\ZZ)\simeq \QQ/\ZZ & \text{ si } 0\leq p=q\leq 5\\
H^{5}(X, \QQ/\ZZ) \simeq (\QQ/\ZZ)^{\oplus 42} & \text{ si } p=2 \text{ et } q=3\\
0 & \text{ sinon.}
\end{cases}
\end{equation*}
En particulier, $H^{i}_{\nr}(X, \QQ/\ZZ):=\bigcup_{n\in \NN^{*}} H^{i}_{\nr}(X, \ZZ/n)=0$ pour tout $i>0$.
\item La suite spectrale de coniveau 
$$E_{1}^{p,q}=\bigoplus_{Y\in X^{(p)}}H^{q-p}\left(\CC(Y), \ZZ(-p)\right)\Longrightarrow H^{p+q}(X, \ZZ)$$ est d\'eg\'en\'er\'ee en $E_{2}$ avec le terme $E_{2}^{p,q}=E^{p,q}_{\infty}=H^{p}\left(X_{\Zar}, \HH^{q}(\ZZ)\right)$, pr\'ecis\'e dans (1).
\end{enumerate}
\end{thm}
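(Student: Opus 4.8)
The plan is to run the coniveau (Bloch--Ogus) spectral sequence of (3), to identify its $E_2$-page with the groups in (1), and to show it degenerates; statement (2) then follows formally. Indeed, (3) is automatic once one knows that the $E_2$-page is concentrated on the diagonal $p=q$ (for $0\le q\le5$) together with the single extra term $E_2^{2,3}$: for such a page every differential $d_r$, $r\ge2$, either starts or lands in the range $p>q$, where $E_2^{p,q}=H^p(X_{\Zar},\HH^q(\ZZ))=0$ by the Bloch--Ogus--Gabber vanishing (and $\HH^q(\ZZ)=0$ for $q>\dim X=5$), so it vanishes and $E_2=E_\infty$. And (2), together with $H^i_{\nr}(X,\QQ/\ZZ)=H^0(X_{\Zar},\HH^i(\QQ/\ZZ))=0$ for $i>0$, follows from (1) by a diagram chase with the coefficient sequence $0\to\ZZ\to\QQ\to\QQ/\ZZ\to0$, using $\HH^q(\QQ)=\HH^q(\ZZ)\otimes\QQ$ and the torsion-freeness of the integral answer.

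So the heart of the matter is the computation of $E_2^{p,q}=H^p(X_{\Zar},\HH^q(\ZZ))$. The nonzero terms lie in $\{0\le p\le q\le5\}$, and $\HH^0(\ZZ)=\ZZ$ handles the row $q=0$. On the diagonal, the group of codimension-$q$ cycles surjects onto $E_2^{q,q}$ (the Gersten complex for Milnor $K$-theory maps to that for Betti cohomology, an isomorphism in top degree), and the composite $\CH^q(X)\twoheadrightarrow E_2^{q,q}\twoheadrightarrow E_\infty^{q,q}\hookrightarrow H^{2q}(X,\ZZ)$ is the cycle class map; since $N^{q+1}H^{2q}(X,\ZZ)=0$ (a class supported in codimension $\ge q+1$ lies in $H^{2q}$ of a subvariety of dimension $\le4-q$), one gets $E_\infty^{q,q}=\operatorname{im}\bigl(\CH^q(X)\to H^{2q}(X,\ZZ)\bigr)=H^{2q}(X,\ZZ)\cong\ZZ$ by the computations recalled in the introduction, so each $E_2^{q,q}$ is cyclic and surjects onto $\ZZ$. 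For the opposite inequality we invoke two geometric facts. First, a smooth cubic hypersurface has universally trivial $\CH_0$, so by Bloch--Srinivas $\Delta_X=X\times x_0+\Gamma$ in $\CH^5(X\times X)$ with $\Gamma$ supported on $D\times X$ for a divisor $D$; taking a resolution $\nu\colon\tilde D\to D\hookrightarrow X$ and acting by correspondences on Bloch--Ogus cohomology, $X\times x_0$ acts as $0$ on $H^p(X_{\Zar},\HH^q(\ZZ))$ whenever $q\ge1$ and $(p,q)\ne(5,5)$ (it factors through $\HH^\bullet$ of a point), so $H^p(X_{\Zar},\HH^q(\ZZ))$ is a direct summand of $H^p(\tilde D_{\Zar},\HH^q(\ZZ))$, which vanishes for $q>\dim\tilde D=4$; hence $E_2^{p,5}=0$ for $p\le4$. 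Second, $H^5(X,\ZZ)$ has geometric coniveau $2$, i.e. $N^2H^5(X,\ZZ)=H^5(X,\ZZ)$ — a known fact for smooth cubic fivefolds, realized e.g. through the cylinder homomorphism attached to the Fano variety of lines $F\subset F\times X$ (a smooth rationally connected sixfold) — which yields $E_\infty^{0,5}=E_\infty^{1,4}=0$ and $E_\infty^{2,3}=H^5(X,\ZZ)$.

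The remaining bookkeeping is a spectral-sequence chase: from the vanishing of $H^j(X,\ZZ)$ for $j$ odd, $j\ne5$, and $H^{2j}(X,\ZZ)\cong\ZZ$, together with the elementary remark that a cyclic group surjecting onto $\ZZ$ equals $\ZZ$, one propagates the above through the pages to obtain $E_2^{q,q}\cong\ZZ$ for all $q$, $E_2^{p,q}=0$ for $0\le p<q\le5$ with $(p,q)\ne(2,3)$ — in particular $H^q_{\nr}(X,\ZZ)=0$ for $q\ge1$ — and $E_2^{2,3}\cong H^5(X,\ZZ)\cong\ZZ^{\oplus42}$; the divisibility of $\CH^3(X)_{\alg}\cong J^5(X)$ is what makes the chase close in the column $q=3$. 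The steps I expect to be the real obstacles are the two ``edge'' identifications not forced by formal nonsense: that $E_2^{2,3}\cong H^5(X,\ZZ)$, equivalently that the birational invariant $H^4_{\nr}(X,\ZZ)$ — which a priori sits in an extension $0\to H^4_{\nr}(X,\ZZ)\to E_2^{2,3}\to H^5(X,\ZZ)\to0$ — vanishes; and that $E_2^{3,3}\cong\ZZ$, i.e. that algebraically trivial $2$-cycles die in $H^3(X_{\Zar},\HH^3(\ZZ))$, for which one needs the triviality of $\Griff^3(X)$ and the isomorphism $\Phi\colon\CH^3(X)_{\alg}\xrightarrow{\cong}J^5(X)$ of Theorem~\ref{main}.
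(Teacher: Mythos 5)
Your overall architecture --- compute the $E_2$-page, observe that concentration on the diagonal plus the single term $E_2^{2,3}$ forces degeneration at $E_2$, identify $E_\infty^{2,3}$ with $H^5(X,\ZZ)$ via the algebraic coniveau $2$ of Proposition~\ref{coniveau}, and deduce part (2) from the torsion-freeness of the integral answer --- is the paper's. The first genuine gap is your assertion that a smooth cubic hypersurface has universally trivial $\CH_0$, hence an \emph{integral} decomposition $\Delta_X=X\times x_0+\Gamma$ with $\Gamma$ supported on $D\times X$ for a divisor $D$. This is not known for smooth cubic fivefolds; Bloch--Srinivas (and the refined Paranjape--Laterveer version actually used in Corollaire~\ref{cor:DDcubique5}, where the correction term is supported on $X\times T$ with $\codim T\ge 2$) only gives $m\Delta_X=\cdots$ for some nonzero integer $m$. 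With that multiplier, your ``direct summand of $H^p(\tilde D_{\Zar},\HH^q_{\tilde D}(\ZZ))$'' conclusion degrades to: the off-diagonal groups are killed by $m$, i.e.\ are torsion. Note also that a divisor is too coarse for the terms $H^1(X,\HH^q(\ZZ))$ with $q\le 4$: since $\dim\tilde D=4$, the factorization through $\tilde D$ kills nothing there, whereas the codimension-$2$ support of $T$ makes the action factor through $H^{-1}(\tilde T,\HH^{q-2}_{\tilde T}(\ZZ))=0$ (Lemme~\ref{lem:H1i}).

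Because of this, the torsion-freeness statements that are the technical heart of the theorem are absent from your proposal: the vanishing of $H^4_{\nr}(X,\QQ/\ZZ)$ and $H^5_{\nr}(X,\QQ/\ZZ)$ (Lemmas~\ref{nr4} and~\ref{nr5}), obtained from the quadric-surface fibration $\tilde X\to\PP^3$ (Kahn--Sujatha) and the conic bundle $\tilde X\to\PP^4$ together with cohomological-dimension bounds, fed into the sequence $0\to H^j(X,\HH^i(\ZZ))\otimes\QQ/\ZZ\to H^j(X,\HH^i(\QQ/\ZZ))\to H^{j+1}(X,\HH^i(\ZZ))_{\tor}\to 0$ of Lemme~\ref{lemma:SES}. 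Without them your chase does not close: $E_\infty^{3,4}=0$ only says that $E_2^{3,4}$ is a quotient of $E_2^{1,5}=H^1(X,\HH^5(\ZZ))$ via $d_2$, so you still must prove $H^1(X,\HH^5(\ZZ))=0$; and the degeneration reduces to the one possibly nonzero differential $d_2\colon E_2^{2,5}\to E_2^{4,4}$, which the paper kills by showing the source is torsion while the target is the free group $\ZZ$ (Lemme~\ref{lem:Hii}) --- your route to $E_2^{2,5}=0$ again rests on the unavailable integral decomposition. Two smaller points: $E_\infty^{0,q}$ is a subgroup, not a quotient, of $E_2^{0,q}=H^q_{\nr}(X,\ZZ)$, so the vanishing of the latter must be quoted as an input (rational connectedness, Colliot-Th\'el\`ene--Voisin) rather than read off the abutment; and what $E_2^{3,3}\cong\ZZ$ requires is precisely $\Griff^3(X)=0$ from Theorem~\ref{main}(1) --- algebraically trivial cycles die in $H^3(X,\HH^3(\ZZ))\cong\CH^3(X)/\alg$ for free, and neither the Abel--Jacobi isomorphism nor divisibility enters at that step.
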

La valeur de la plupart des groupes de cohomologie dans le th\'eor\`eme \ref{main2} \'etait connue (\emph{cf.} \cite{CTVoisin2012IHC}). L'ingr\'edient le plus important, en plus de \cite{MR0412191} et \cite{CTVoisin2012IHC}, pour avoir le r\'esultat complet ci-dessus est l'annulation de $H^{4}_{\nr}(X, \QQ/\ZZ)$, qui r\'esultera directement du travail de Kahn--Sujatha \cite{kahn2001unramifiedquadric2}, \emph{cf.} \cite[P.4]{mboro2017cubic} et Lemme \ref{nr4} pour les d\'etails. Ceci r\'epond une question de Voisin \cite[P.144, Question 11.2]{MR3586396}. Certains de nos arguments sont inspir\'es de \cite{MR2987669} et \cite{CTVoisin2012IHC}.

\begin{rem}
Les annulations de cohomologie non ramifi\'ee dans le th\'eor\`eme \ref{main2} laissent ouverte la question de savoir si $H^{i}_{\nr}(X, \QQ/\ZZ)$ est \emph{universellement trivial}, \emph{i.e.} si la fl\`eche 
$$H^{i}(K, \QQ/\ZZ)\to H^{i}_{\nr}(X_{K}/K, \QQ/\ZZ)$$ est un isomorphisme pour toute extension du corps $K/\CC$. Pour $i=0, 1, 2$, c'est un isomorphisme pour des raisons g\'en\'erales. Pour $i=3$, c'est encore un isomorphisme, comme \'etabli par Colliot-Th\'el\`ene dans \cite[Th\'eor\`eme 5.6 (vii)]{MR3404380}.
\end{rem}

\bigskip
Donnons la structure de l'article\,: au \S \ref{sect:Hnr}, apr\`es un rappel de quelques r\'esultats de base en cohomologie non ramifi\'ee dont nous aurons besoin, on montre les annulations de $H^{4}_{\nr}(X, \QQ/\ZZ)$ et $H^{5}_{\nr}(X, \QQ/\ZZ)$\,; le \S \ref{sect:Prepa} contient deux ingr\'edients cruciaux pour la preuve du th\'eor\`eme \ref{main}, \`a savoir, le fait que $H^{5}(X, \ZZ)$ est support\'e en codimension 2 et une d\'ecomposition de la diagonale de $X$\,; les d\'emonstrations de la partie (1) et la partie (2) du th\'eor\`eme \ref{main} sont donn\'ees aux \S \ref{sect:Griff} et \S \ref{sect:IsomAJ} respectivement\,; finalement au \S\ref{sect:SS} on montre le th\'eor\`eme \ref{main2}.

\bigskip
\noindent\textit{Remerciements\,:} L'id\'ee de ce travail est issue du groupe de travail que nous avons coorganis\'e \`a Lyon en 2016-2017. Nous remercions la F\'ed\'eration de Recherche en Math\'ematiques Rh\^one-Alpes-Auvergne (FR 3490) pour le support de financement \emph{Projets Inter-laboratoires}. L'article a \'et\'e pr\'epar\'e durant les deux rencontres \emph{G\'eom\'etrie Alg\'ebrique et G\'eom\'etrie Complexe} au CIRM en janvier et d\'ecembre 2017. On remercie Olivier Benoist, Alberto Collino, Jean-Louis Colliot-Th\'el\`ene et les rapporteurs pour leurs questions et nombreuses suggestions qui am\'eliorent beaucoup cet article. 

\section{Cohomologie non ramifi\'ee}\label{sect:Hnr}
\subsection{Rappels}
Nous renvoyons les lecteurs \`a \cite{MR999316}, \cite{MR1327280}, \cite{CTVoisin2012IHC} pour plus de d\'etails sur la cohomologie non ramifi\'ee. Certains des r\'esultats rappel\'es ci-dessous (le lemme \ref{lemma:SES} par exemple) reposent sur la conjecture de Bloch--Kato, \'etudi\'ee par Merkurjev--Suslin \cite{MR675529}, \cite{MR1062517}, Rost \cite{RostNonpub} et Voevodsky \cite{MR2031199}, montr\'ee par Voevodsky \cite{MR2811603} et Rost (\emph{cf.}~\cite{MR2220090}).

Soit $X$ une vari\'et\'e complexe projective lisse connexe. On consid\`ere l'application `identit\'e'
$$\pi\colon X_{\cl}\to X_{\Zar}$$
qui est un morphisme (continu) entre le site de la topologie classique et celui de la topologie de Zariski.

Pour tout groupe ab\'elien $A$ et tout entier positif $i$, on d\'efinit un faisceau pour la topologie de Zariski sur $X$ par l'image directe d\'eriv\'ee du faisceau constant $A$\,:
\begin{equation*}
\HH^{i}(A):= \HH^{i}_{X}(A):=R^{i}\pi_{*}A.
\end{equation*}
Autrement-dit, $\HH^{i}(A)$ est le faisceau pour la topologie de Zariski associ\'e au pr\'efaisceau $U\mapsto H^{i}(U, A)$, o\`u $U$ est un ouvert de Zariski de $X$, muni de la topologie classique et $H^{i}$ signifie la cohomologie singuli\`ere (Betti).

Par d\'efinition, la $i$-\`eme \emph{cohomologie non ramifi\'ee} de $X$ \`a coefficient dans $A$ est
\begin{equation*}
H^{i}_{\nr}\left(X, A\right):= H^{0}\left(X, \HH^{i}(A)\right).
\end{equation*}

Pour toute sous-vari\'et\'e ferm\'ee int\`egre $Y$ de $X$, on note $H^{j}(\CC(Y), A):=\varinjlim_{U}H^{j}(U, A)$ o\`u $U$ parcourt les ouverts de Zariski non vides de $Y$.
Gr\^ace au travail de Bloch--Ogus sur la conjecture de Gersten \cite{MR0412191}, on a une r\'esolution flasque suivante du faisceau $\HH^{i}(A)$ pour la topologie de Zariski\,:

\begin{equation}\label{eqn:Resolution}
0\to \HH^{i}(A)\to H^{i}(\CC(X), A)\xrightarrow{\partial} \bigoplus_{Y\in X^{(1)}} {i_{Y}}_{*}H^{i-1}(\CC(Y), A(-1))\xrightarrow{\partial} \cdots\xrightarrow{\partial} \bigoplus_{Y\in X^{(i)}} {i_{Y}}_{*}A(-i)\to 0,
\end{equation}
o\`u les fl\`eches $\partial$ sont donn\'ees par le r\'esidu topologique (\cite[P.~417]{MR1988456}) et ${i_{Y}}_{*}$ signifie l'image directe pour un faisceau constant par l'immersion ferm\'ee naturelle de $Y$ vers $X$.

Comme la longueur de la r\'esolution (\ref{eqn:Resolution}) est $i$, on a la cons\'equence suivante\,:
\begin{cor}[{\cite[0.3]{MR0412191}}]\label{cor:Vanishing}
Pour tout $j>i$, $H^{j}(X, \HH^{i}(A))=0$.
\end{cor}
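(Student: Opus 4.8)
The plan is to read the vanishing off directly from the flasque resolution (\ref{eqn:Resolution}). Recall the standard homological fact that if $0 \to \mathcal{F} \to \mathcal{I}^{\bullet}$ is a resolution of a Zariski sheaf $\mathcal{F}$ on $X$ by sheaves that are acyclic for the global sections functor $\Gamma(X,-)$, then $H^{j}(X,\mathcal{F}) \cong H^{j}\big(\Gamma(X,\mathcal{I}^{\bullet})\big)$ for every $j$. The resolution (\ref{eqn:Resolution}) of $\HH^{i}(A)$ has its nonzero terms concentrated in the cohomological degrees $0,1,\dots,i$ — the last one being $\bigoplus_{Y\in X^{(i)}}{i_{Y}}_{*}A(-i)$ in degree $i$ — so, once one knows that its terms are $\Gamma(X,-)$-acyclic, the complex $\Gamma(X,\mathcal{I}^{\bullet})$ obtained by applying $\Gamma(X,-)$ to (\ref{eqn:Resolution}) is concentrated in degrees $[0,i]$, and its cohomology therefore vanishes in all degrees $>i$; this is exactly the assertion $H^{j}(X,\HH^{i}(A))=0$ for $j>i$.

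Thus the only point to check is that each term of (\ref{eqn:Resolution}) is $\Gamma(X,-)$-acyclic, which is in fact already part of the statement of the Bloch--Ogus theorem recalled above (the resolution is \emph{flasque}); I would nevertheless indicate why for completeness. The term in cohomological degree $p$ is $\bigoplus_{Y\in X^{(p)}}{i_{Y}}_{*}H^{i-p}(\CC(Y),A(-p))$, a direct sum — indexed by the integral closed subvarieties $Y\subset X$ of codimension $p$ — of the direct images under the closed immersions $i_{Y}\colon Y\hookrightarrow X$ of constant Zariski sheaves on the integral, hence irreducible, schemes $Y$. A constant sheaf on an irreducible topological space is flasque, since every nonempty open subset is again irreducible and so all restriction maps between nonempty opens are isomorphisms; direct images of flasque sheaves are flasque; and, $X$ being Noetherian, an arbitrary direct sum of flasque sheaves is again flasque. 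Since flasque sheaves are $\Gamma(X,-)$-acyclic, the claim follows.

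There is essentially no obstacle here: the substantive content is the existence of the flasque resolution (\ref{eqn:Resolution}), of length $i$, which is the theorem of Bloch--Ogus \cite{MR0412191} that we take as given, and everything else is the formal bookkeeping above.
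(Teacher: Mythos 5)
Your proof is correct and is essentially the paper's own argument: the paper derives the corollary in one line from the fact that the flasque resolution (\ref{eqn:Resolution}) has length $i$, which is precisely the bookkeeping you carry out. Your additional verification that the terms are flasque (constant sheaves on irreducible spaces, pushforwards, and direct sums over a Noetherian space) is accurate and simply makes explicit what the paper takes for granted from Bloch--Ogus.
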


Pour comparer la cohomologie \`a coefficients de torsion et celle \`a coefficients dans $\ZZ$, on a besoin souvent du r\'esultat suivant\,:
\begin{lem}\label{lemma:SES}
Pour tout $i,j\in \NN$ et $n\in \NN^{*}$, on a une suite exacte courte
$$0\to H^{j}(X, \HH^{i}(\ZZ))/n\to H^{j}(X, \HH^{i}(\ZZ/n))\to H^{j+1}(X,\HH^{i}(\ZZ))[n]\to 0.$$
Prenant la limite inductive sur $n$, $$0\to H^{j}(X, \HH^{i}(\ZZ))\otimes \QQ/\ZZ\to H^{j}(X, \HH^{i}(\QQ/\ZZ))\to H^{j+1}(X,\HH^{i}(\ZZ))_{\tor}\to 0.$$
\end{lem}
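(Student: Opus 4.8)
The plan is to reduce the statement to the ordinary universal coefficient theorem over the function fields $\CC(Y)$, using the Bloch--Ogus flasque resolution $(\ref{eqn:Resolution})$. For any abelian group $A$, that resolution identifies $\HH^i(A)$, in the derived category of Zariski sheaves, with the flasque complex $\mathcal G^\bullet(A)$ whose $p$-th term is $\bigoplus_{Y\in X^{(p)}}{i_Y}_*H^{i-p}(\CC(Y),A(-p))$. Taking global sections, $H^j(X,\HH^i(A))$ is computed as the $j$-th cohomology of the complex $C^\bullet(A)$ with $p$-th term $\bigoplus_{Y\in X^{(p)}}H^{i-p}(\CC(Y),A(-p))$ and differential given by the residues; equivalently, this is the Bloch--Ogus identification of the $E_2$-page of the coniveau spectral sequence with $H^\bullet(X_\Zar,\HH^\bullet(A))$.

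Next, for each integral subvariety $Y\subseteq X$ and each $m$, passing to the filtered colimit over dense Zariski opens of $Y$ in the classical universal coefficient theorem gives a short exact sequence $0\to H^m(\CC(Y),\ZZ)/n\to H^m(\CC(Y),\ZZ/n)\to H^{m+1}(\CC(Y),\ZZ)[n]\to 0$, and these are compatible with the residue maps, since reduction mod $n$ and the integral Bockstein both commute with residues up to sign. Summing over $Y\in X^{(p)}$ and letting $p$ vary produces a short exact sequence of complexes $0\to C^\bullet(\ZZ)/n\to C^\bullet(\ZZ/n)\to T^\bullet\to 0$, where $T^\bullet$ is the termwise $n$-torsion subcomplex of the complex $C^\bullet_{i+1}(\ZZ)$ attached to $\HH^{i+1}(\ZZ)$.

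The only step with genuine content is to pass to cohomology and match the outer terms against the algebraic universal coefficient sequence for $C^\bullet(\ZZ)$; the subtlety is that $C^\bullet(\ZZ)$ is not obviously a complex of free abelian groups, so one cannot blindly invoke $C^\bullet(\ZZ)\otimes^{L}\ZZ/n$. The cleanest route — and what I expect to be the crux — is to prove that $H^m(\CC(Y),\ZZ)=\varinjlim_U H^m(U,\ZZ)$ is torsion-free for every integral $Y$, by induction on $\dim Y$ using the affine Lefschetz hyperplane theorem to lower the cohomological degree (the case $\dim Y=0$ being trivial, and a torsion class being unwound to a non-torsion one upon removing a further ample divisor). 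Granting this, $T^\bullet=0$, multiplication by $n$ is injective on $C^\bullet(\ZZ)$, so $0\to C^\bullet(\ZZ)\xrightarrow{n} C^\bullet(\ZZ)\to C^\bullet(\ZZ/n)\to 0$ is a genuine short exact sequence of complexes, and its cohomology long exact sequence splits into exactly the asserted short exact sequences; Corollary \ref{cor:Vanishing} is available should one need to pin down the vanishing ranges in the bookkeeping.

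Finally, the $\QQ/\ZZ$-version follows from the $\ZZ/n$-version by passing to the filtered colimit over $n$ (along $\ZZ/n\to\ZZ/mn$): exactness of filtered colimits, together with $\varinjlim_n M/n=M\otimes\QQ/\ZZ$ and $\varinjlim_n M[n]=M_{\tor}$, transforms the first sequence into the second.
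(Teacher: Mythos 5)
Your reduction to the Gersten complexes and the final passage to the colimit over $n$ are fine, but the step you yourself single out as the crux --- the torsion-freeness of $H^m(\CC(Y),\ZZ)=\varinjlim_U H^m(U,\ZZ)$ --- cannot be obtained by induction on $\dim Y$ via the affine Lefschetz theorem. That theorem only gives $H^m(U,\ZZ)=0$ for $U$ affine of dimension $d$ and $m>d$ (this is what bounds the length of the resolution (\ref{eqn:Resolution})); it says nothing about torsion in degrees $m\leq\dim Y$, and ``removing a further ample divisor'' does not unwind a torsion class: by the Bockstein sequence, an $n$-torsion class in $H^m(U,\ZZ)$ comes from a class in $H^{m-1}(U,\ZZ/n)$, and killing it on a dense open amounts to lifting that mod-$n$ class to an integral one over some smaller open --- which is precisely the statement at issue. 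Indeed, the vanishing of $H^{m}(\CC(Y),\ZZ)[n]$ is equivalent to the surjectivity of $H^{m-1}(\CC(Y),\ZZ)\to H^{m-1}(\CC(Y),\ZZ/n)$, and the only known proof goes through the Bloch--Kato conjecture: one identifies $H^{m-1}\left(\CC(Y),\mu_n^{\otimes(m-1)}\right)$ with Milnor $K$-theory mod $n$, so that every class is a sum of symbols, i.e.\ of cup products of classes $(f)\in H^{1}(\CC(Y),\mu_n)$, each of which visibly lifts to an integral degree-one class on the open set where $f$ is invertible. This is elementary for $m=2$ (Kummer theory), is Merkurjev--Suslin for $m=3$, and is the Voevodsky--Rost theorem in general; there is no elementary topological substitute.

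The paper's proof packages exactly this input by quoting \cite[Th\'eor\`eme 3.1]{CTVoisin2012IHC}, which asserts the exactness of the sequence of Zariski sheaves $0\to\HH^{i}(\ZZ)\xrightarrow{\times n}\HH^{i}(\ZZ)\to\HH^{i}(\ZZ/n)\to 0$ (the injectivity on the left being the Bloch--Kato point above), and then simply takes the associated long exact sequence in Zariski cohomology; note that the opening of \S\ref{sect:Hnr} explicitly flags that Lemma \ref{lemma:SES} depends on the Bloch--Kato conjecture. Your argument becomes correct once you replace the inductive claim by this citation (or by the symbol argument just sketched); the rest of your scaffolding --- compatibility of the universal coefficient sequences with the residue maps, the resulting short exact sequence of Gersten complexes, and the colimit over $n$ --- is sound, though more roundabout than working directly with the sheaf-level sequence.
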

\begin{proof}
Par \cite[Th\'eor\`eme 3.1]{CTVoisin2012IHC}, on a une suite exacte courte de faisceaux pour la topologie de Zariski sur $X$
$$0\to \HH^{i}(\ZZ)\xrightarrow{\times n}\HH^{i}(\ZZ)\to \HH^{i}(\ZZ/n)\to 0.$$ La conclusion d\'ecoule directement de la suite exacte longue associ\'ee.
\end{proof}

Nous aurons besoin du r\'esultat suivant, d\^u \`a Deligne dans un cadre plus g\'en\'eral, qui compare deux suites spectrales\,:
\begin{prop}[\cite{MR1424593}, \emph{cf.} {\cite[Proposition 6.4]{MR0412191}}, {\cite[Remark 4.19]{MR2723320}}]\label{prop:TwoSS}
Soit $X$ une vari\'et\'e projective lisse complexe connexe.
La suite spectrale de coniveau 
$$E_{1}^{p,q}=\bigoplus_{Y\in X^{(p)}}H^{q-p}\left(\CC(Y), A(-p)\right)\Longrightarrow H^{p+q}(X, A)$$
co\"incide \`a partir de la page $E_{2}$ avec la suite spectrale de Leray associ\'ee au morphisme continu $\pi\colon X_{\cl}\to X_{\Zar}$
$$E_{2}^{p,q}=H^{p}(X, \HH^{q}(A))\Longrightarrow H^{p+q}(X, A).$$
En particulier, la filtration d'aboutissement $N^{\bullet}$ sur $H^{k}(X, A)$ est celle par le \emph{coniveau alg\'ebrique}\,:
$$N^{c}H^{k}(X, A)=\sum_{Z}\ker\left(H^{k}(X,A)\to H^{k}(X\backslash Z, A)\right),$$
o\`u $Z$ parcourt les sous-ensembles alg\'ebriques ferm\'es de $X$ de codimension $\geq c$.
\end{prop}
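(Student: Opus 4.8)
Le plan est de suivre la m\'ethode de Deligne consign\'ee dans \cite{MR0412191}. Les deux suites spectrales convergent vers $H^{p+q}(X,A)$, et il s'agit de les identifier \`a partir de $E_{2}$ gr\^ace \`a la r\'esolution de Gersten (\ref{eqn:Resolution}). La suite spectrale de coniveau provient du couple exact form\'e par la cohomologie \`a supports filtr\'ee par la codimension, de terme initial $E_{1}^{p,q}=\bigoplus_{x\in X^{(p)}}H^{p+q}_{x}(X,A)$\,; la lissit\'e de $X$ et l'isomorphisme de puret\'e donnent $H^{p+q}_{x}(X,A)\cong H^{q-p}(\CC(Y), A(-p))$ pour $Y=\overline{\{x\}}$, et la diff\'erentielle $d_{1}$ s'identifie au r\'esidu $\partial$. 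Pour chaque $q$ fix\'e, la ligne $(E_{1}^{\bullet,q}, d_{1})$ est exactement le complexe des sections globales de la r\'esolution flasque (\ref{eqn:Resolution}) du faisceau $\HH^{q}(A)$\,; celle-ci \'etant flasque, on obtient un isomorphisme canonique $E_{2}^{p,q}(\mathrm{coniveau})\cong H^{p}(X_{\Zar}, \HH^{q}(A))=E_{2}^{p,q}(\mathrm{Leray})$. Ce point est formel une fois acquise la conjecture de Gersten de Bloch--Ogus.

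Le c\oe{}ur de la preuve, et sa principale difficult\'e, est de montrer que cette co\"incidence persiste au-del\`a de la page $E_{2}$, pour toutes les diff\'erentielles $d_{r}$ ($r\geq 2$) ainsi que pour les filtrations d'aboutissement. Le plan est d'introduire le complexe de Cousin $C^{\bullet}$ de $R\pi_{*}A$ associ\'e \`a la filtration par codimension du support\,: la lissit\'e de $X$ jointe \`a la conjecture de Gersten de Bloch--Ogus assure que $R\pi_{*}A$ v\'erifie la condition de Cohen--Macaulay rendant $C^{\bullet}$ une r\'esolution flasque de $R\pi_{*}A$, de sorte que ses sections globales calculent $H^{\bullet}(X,A)$. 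La filtration de $C^{\bullet}$ par la codimension induit alors, sur ces sections globales, la suite spectrale de coniveau, tandis que la suite spectrale de Leray n'est autre que la suite spectrale d'hypercohomologie de $R\pi_{*}A$ pour la filtration canonique par troncation, de gradu\'es $\HH^{q}(A)[-q]$. Le lemme de Deligne (\cite{MR1424593}) identifie ces deux suites spectrales \`a partir de $E_{2}$\,; c'est ici que r\'eside le contenu non formel du r\'esultat, la comparaison des diff\'erentielles sup\'erieures exigeant un argument de cat\'egorie d\'eriv\'ee et non un simple calcul sur les pages $E_{2}$.

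Il resterait enfin \`a identifier la filtration d'aboutissement\,: par construction, la filtration induite par la suite spectrale de coniveau sur $H^{k}(X,A)$ est $\varinjlim_{Z}\ker\bigl(H^{k}(X,A)\to H^{k}(X\backslash Z, A)\bigr)$, o\`u $Z$ parcourt les ferm\'es de codimension $\geq c$\,; cette r\'eunion \'etant filtrante (car $Z_{1}\cup Z_{2}$ reste de codimension $\geq c$), elle co\"incide avec la somme $N^{c}H^{k}(X,A)$ de l'\'enonc\'e, et la co\"incidence des suites spectrales montre que c'est aussi la filtration de Leray. J'attends que l'\'etape de comparaison de Deligne soit l'obstacle principal\,: l'identification des pages $E_{2}$ est formelle d\`es que l'on dispose de Gersten, alors que l'accord des diff\'erentielles sup\'erieures repose sur l'analyse du complexe de Cousin dans la cat\'egorie d\'eriv\'ee.
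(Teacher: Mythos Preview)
The paper does not give its own proof of this proposition: it is stated as a known result with attributions to Deligne \cite{MR1424593}, Bloch--Ogus \cite[Proposition 6.4]{MR0412191}, and Voisin \cite[Remark 4.19]{MR2723320}, and is used as a black box throughout. Your sketch is a faithful outline of precisely the argument contained in those references --- identifying the $E_{2}$-pages via the Gersten resolution (\ref{eqn:Resolution}), then invoking Deligne's Cousin-complex argument to propagate the identification to all higher pages and to the abutment filtration --- so there is nothing to compare beyond noting that your proposal reproduces the cited proof rather than diverging from it.
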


\subsection{Quelques annulations de cohomologie non ramifi\'ee}\label{subsect:VanishingHnr}
Dans la suite de cet article, $X$ est une hypersurface cubique lisse de dimension $5$ d\'efinie sur $\CC$. On calcule sans difficult\'e par \cite[Proposition 18.2]{MR1988456} les nombres de Hodge de $X$: $h^{i,i}(X)=1$ pour tout $0\leq i\leq 5$, $h^{2,3}(X)=21$ et les autres $h^{p,q}(X)$ sont nuls.

D'apr\`es \cite[Proposition 3.3]{CTVoisin2012IHC}, $H^{>0}_{\nr}(X, \ZZ)$ s'annule comme $X$ est une vari\'et\'e rationnellement connexe. On s'int\'eresse alors aux groupes de cohomologie non ramifi\'ee \`a coefficients de torsion $$H^{*}_{\nr}(X, \QQ/\ZZ):=\bigcup_{n\in \NN^{*}} H^{i}_{\nr}(X, \ZZ/n).$$ Notons que l'union ici a bien un sens car pour tous entiers positifs $n, m$, on a l'injectivit\'e de la fl\`eche $H^{i}(\CC(X), \ZZ/n)\to H^{i}(\CC(X), \ZZ/nm)$ gr\^ace aux th\'eor\`emes de Voevodsky \cite{MR2031199}, \cite{MR2811603}.

Le groupe $H^{*}_{\nr}(X, \QQ/\ZZ)$ \'etaient bien compris sauf en degr\'es 4 et 5\,:
\begin{itemize}
\item $H^{0}_{\nr}(X, \QQ/\ZZ)\cong \QQ/\ZZ$.
\item $H^{1}_{\nr}(X, \QQ/\ZZ)\cong H^{1}(X, \QQ/\ZZ)=0$.
\item $H^{2}_{\nr}(X, \QQ/\ZZ)\cong \Br(X)\cong H^{3}(X, \ZZ)_{\tor}=0$. (\emph{cf.} \cite[Proposition 4.2.3]{MR1327280}). 
\item $H^{3}_{\nr}(X, \QQ/\ZZ)=0$, car d'apr\`es l'interpr\'etation \cite[Th\'eor\`eme 1.1]{CTVoisin2012IHC}, il mesure exactement le d\'efaut de la conjecture de Hodge enti\`ere\,; or $H^{4}(X, \ZZ)$ est bien engendr\'e par la classe alg\'ebrique $h^{2}$, o\`u $h=c_{1}\left(\mathcal{O}_{X}(1)\right)$ la classe d'une section hyperplane.
\item $H^{i}_{\nr}(X, \QQ/\ZZ)=0$ pour tout $i>\dim(X)=5$, car le faisceau $\HH^{i}(\QQ/\ZZ)=0$ par le th\'eor\`eme de Lefschetz pour les vari\'et\'es affines (\cite{MR0177422}).
\end{itemize}

Traitons les deux cas restant\,:

\begin{lem}[{\cite[P.4]{mboro2017cubic}}]\label{nr4}
Le groupe $H^4_{\nr}\left(X, \QQ/\ZZ(4)\right)$ est nul.
\end{lem}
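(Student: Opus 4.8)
The plan is to combine birational invariance of unramified cohomology with the computation of the unramified cohomology of quadrics due to Kahn--Sujatha, after equipping $X$ with a quadric surface fibration (this is essentially the strategy indicated in \cite[P.4]{mboro2017cubic}). First I would use that a smooth cubic fivefold contains a $2$-plane $\Pi\cong\PP^{2}$: the Fano scheme of $2$-planes on $X$ is the zero locus of a section of the rank-$10$ bundle $\mathrm{Sym}^{3}\mathcal{S}^{\vee}$ on the Grassmannian $G(3,7)$ of $2$-planes in $\PP^{6}$ (of dimension $12$), and since the top Chern class of this bundle is non-zero, that scheme is non-empty (indeed $2$-dimensional). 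Blowing up $\Pi$ resolves the linear projection away from $\Pi$, so that $Y:=\mathrm{Bl}_{\Pi}X$ — smooth and projective, being the blow-up of a smooth variety along a smooth centre — carries a morphism $\pi\colon Y\to\PP^{3}$ whose generic fibre is the integral quadric surface $Q$ cut out on a $\PP^{3}$ through $\Pi$ residually to $\Pi$; thus $Q$ is a quadric over the field $F:=\CC(\PP^{3})$, which has cohomological dimension $3$.

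By birational invariance of unramified cohomology, $H^{4}_{\nr}(X,\QQ/\ZZ)\cong H^{4}_{\nr}(Y,\QQ/\ZZ)=H^{0}(Y,\HH^{4}(\QQ/\ZZ))$, and by the Bloch--Ogus resolution every class in the latter is unramified along all prime divisors of $Y$ dominating $\PP^{3}$ — equivalently, with respect to all divisorial valuations of $\CC(Y)=F(Q)$ trivial on $F$ — so it lies in $H^{4}_{\nr}(F(Q)/F,\QQ/\ZZ)$. Hence it suffices to prove
\[
H^{4}_{\nr}\big(F(Q)/F,\QQ/\ZZ\big)=0 .
\]
(If $Q$ degenerates to a rank-$3$ quadric it is a cone over a conic $C/F$, so $F(Q)$ is purely transcendental over $F(C)$; by homotopy invariance of $\HH^{\bullet}$ one is then reduced to the same statement for the conic $C$ over $F=\CC(\PP^{3})$, and the argument below applies verbatim.)

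For the prime-to-$2$ part this is elementary: a smooth quadric carries a zero-cycle of degree $2$, so for each odd prime $\ell$ a transfer argument reduces $H^{\bullet}_{\nr}(F(Q)/F,\QQ_{\ell}/\ZZ_{\ell})$ to the cohomology of (a finite extension of) $F$ over which $Q$ acquires a rational point and hence becomes rational by stereographic projection; this vanishes in degree $4$ because $\mathrm{cd}(F)=3$. For the $2$-primary part I would invoke the work of Kahn--Sujatha on the unramified cohomology of quadrics \cite{kahn2001unramifiedquadric2}: for a quadric $X_{q}$ of positive dimension over a field $F$ one has $H^{i}_{\nr}(F(X_{q})/F,\ZZ/2)=0$ for all $i>\mathrm{cd}_{2}(F)$. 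Applying this with $X_{q}=Q$ (of dimension $2$) and $i=4>3=\mathrm{cd}_{2}(F)$ gives $H^{4}_{\nr}(F(Q)/F,\ZZ/2)=0$, hence $H^{4}_{\nr}(F(Q)/F,\QQ_{2}/\ZZ_{2})=0$. Combining the two parts yields the displayed vanishing, and therefore $H^{4}_{\nr}(X,\QQ/\ZZ(4))=0$.

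The heart of the matter — and the step I expect to be the main obstacle — is the $2$-primary case: the whole argument is a routine geometric and diagrammatic reduction to the statement that passing to the function field of a positive-dimensional quadric does not raise the mod-$2$ cohomological dimension of a field, which is precisely what Kahn--Sujatha establish and which must be quoted with care (in particular one must make sure the $4$-dimensional forms / quadric surfaces are covered). The only other point requiring attention is the non-emptiness of the Fano scheme of $2$-planes on an arbitrary smooth cubic fivefold (the Chern-number computation above), since it is exactly the availability of the base $\PP^{3}$, of cohomological dimension $3$, that makes degree $4$ fall into the vanishing range — the analogous conic bundle structure over $\PP^{4}$ coming from a line would only control degrees $>4$.
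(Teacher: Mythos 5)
Your proposal is correct and follows essentially the same route as the paper: blow up a plane $\Pi\subset X$ to obtain a quadric surface fibration over $\PP^{3}$, reduce by birational invariance to the unramified cohomology of the generic fibre over $K=\CC(\PP^{3})$, and conclude from the Kahn--Sujatha surjectivity theorem together with $\mathrm{cd}(K)=3$. The extra touches (Chern-class argument for the existence of $\Pi$, the degenerate-fibre case, and the prime-to-$2$ transfer argument) are careful elaborations of points the paper handles by citing \cite[Theorem 3]{kahn2001unramifiedquadric2} directly for all $\mu_n^{\otimes 4}$ coefficients.
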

\begin{proof}
On fixe un plan projectif $\Pi$ contenu dans $X$. Alors la projection depuis $\Pi$ fournit une application rationnelle $X\dashrightarrow \PP^{3}$. L'\'eclatement de $X$ le long de $\Pi$ r\'esout le lieu d'ind\'etermination et donne une fibration en quadriques 
$$\pi: \tilde X\to \PP^{3}.$$
Notons $Q$ la fibre g\'en\'erique de $\pi$, qui est une surface quadrique lisse sur le corps des fonctions $K:=\CC(\PP^{3})$.

Par  \cite[Theorem 3]{kahn2001unramifiedquadric2}, pour tout $n\in \NN^{*}$, il y a une application surjective
\begin{equation}\label{eqn:SurjToH4nr}
 H^4\left(K, \mu_n^{\otimes 4}\right) \to H^4_{\nr}\left(Q/K, \mu_n^{\otimes 4}\right),
\end{equation}
o\`u le groupe \`a gauche est la cohomologie galoisienne et le groupe \`a droite est la cohomologie non ramifi\'ee, qui est plus g\'en\'eralement bien d\'efinie pour une vari\'et\'e sur un corps quelconque (\emph{cf.}~\cite[\S 2.2]{CTVoisin2012IHC}).

Observons que $H^{4}\left(K, \mu_n^{\otimes 4}\right)$ est nul car la dimension cohomologique du corps $K$ est $3$ \cite[Chapitre II, \S 4.2, Proposition 11]{MR1324577}. Donc $H^4_{\nr}\left(Q/K, \mu_n^{\otimes 4}\right)=0$ par la surjectivit\'e de \eqref{eqn:SurjToH4nr}.
Comme $H^{i}_{\nr}(-, \QQ/\ZZ)$ est un invariant birationnel des vari\'et\'es projectives lisses \cite{MR999316},  on a 
$H^{4}_{\nr}\left(X, \mu_{n}^{\otimes 4}\right)\cong H^{4}_{\nr}\left(\tilde X, \mu_{n}^{\otimes 4}\right)$, qui est nul car c'est un sous-groupe de $H^4_{\nr}\left(Q/K, \mu_n^{\otimes 4}\right)$.
Finalement, $H^4_{\nr}(X, \QQ/\ZZ(4))$ est l'union de ses sous-groupes $H^{4}_{\nr}\left(X, \mu_{n}^{\otimes 4}\right)$, donc est \'egalement nul.
\end{proof}

\begin{lem}\label{nr5}
Le groupe $H^{5}_{\nr}(X, \QQ/\ZZ(5))$ est nul.
\end{lem}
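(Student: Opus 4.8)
The plan is to mimic the proof of Lemma~\ref{nr4}, using the same quadric surface bundle but invoking the theorem of Kahn--Sujatha one cohomological degree higher.

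First I would reuse the geometry set up in the proof of Lemma~\ref{nr4}: fix a projective plane $\Pi\subset X$, project from $\Pi$ and blow up $X$ along $\Pi$ to obtain a quadric surface fibration $\pi\colon\tilde X\to\PP^{3}$ whose generic fibre $Q$ is a smooth quadric surface over the field $K:=\CC(\PP^{3})$. The only numerical ingredient is that $K$ has transcendence degree $3$ over $\CC$, hence cohomological dimension $3$ (\cite[Chapitre II, \S 4.2, Proposition 11]{MR1324577}); in particular $H^{5}\left(K, \mu_{n}^{\otimes 5}\right)=0$ for every $n\in\NN^{*}$.

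Next, by \cite[Theorem 3]{kahn2001unramifiedquadric2} applied to the quadric surface $Q/K$ in degree $5$, the natural map
\[
H^{5}\left(K, \mu_{n}^{\otimes 5}\right)\longrightarrow H^{5}_{\nr}\left(Q/K, \mu_{n}^{\otimes 5}\right)
\]
is surjective, so its target vanishes since its source does: $H^{5}_{\nr}\left(Q/K, \mu_{n}^{\otimes 5}\right)=0$ for all $n$. As $H^{i}_{\nr}(-,\QQ/\ZZ)$ is a birational invariant of smooth projective varieties \cite{MR999316}, we get $H^{5}_{\nr}\left(X, \mu_{n}^{\otimes 5}\right)\cong H^{5}_{\nr}\left(\tilde X, \mu_{n}^{\otimes 5}\right)$, and restriction to the generic fibre of $\pi$ embeds the right-hand side into $H^{5}_{\nr}\left(Q/K, \mu_{n}^{\otimes 5}\right)=0$. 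Hence $H^{5}_{\nr}\left(X, \mu_{n}^{\otimes 5}\right)=0$ for every $n$, and $H^{5}_{\nr}(X,\QQ/\ZZ(5))$, being the union of its subgroups $H^{5}_{\nr}\left(X, \mu_{n}^{\otimes 5}\right)$, vanishes as well.

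The delicate point is the applicability of the Kahn--Sujatha surjectivity in degree $5$, not only in degree $4$. For a quadric the ``exotic'' unramified classes live in low degree, so what is really being used is that once the cohomological degree exceeds $\dim Q=2$ the unramified cohomology of $Q/K$ is entirely induced from the base field; this is in the range covered by their structure results, which already contains the degree-$4$ statement used in Lemma~\ref{nr4}. This is also precisely the step where a cheap restriction--corestriction along a quadratic splitting field of $Q$ does not suffice: such an argument only kills the prime-to-$2$ part of $H^{5}_{\nr}\left(Q/K, \QQ/\ZZ\right)$, and controlling the $2$-primary part is where the theory of quadrics is indispensable. Everything else in the argument is formal.
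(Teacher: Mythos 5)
Your reduction to the generic fibre of a fibration over a base of small cohomological dimension is the right skeleton and matches the paper's strategy, but the key step fails as stated. The surjectivity of $H^{5}\left(K, \mu_{n}^{\otimes 5}\right)\to H^{5}_{\nr}\left(Q/K, \mu_{n}^{\otimes 5}\right)$ for a smooth quadric surface $Q$ is not what \cite[Theorem 3]{kahn2001unramifiedquadric2} gives: that theorem is a statement about unramified cohomology in degree $4$, and the Kahn--Sujatha results (like those of Kahn--Rost--Sujatha in degree $3$ before them) are proved degree by degree using the fine structure of quadratic forms and motivic cohomology. There is no general principle, and in particular no theorem in that paper, asserting that $H^{i}_{\nr}(Q/K,\QQ/\ZZ)$ is induced from the base field as soon as $i>\dim Q$. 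You correctly observe that restriction--corestriction along a splitting field of $Q$ only kills the prime-to-$2$ part of the cokernel, so in your approach the entire content of the lemma sits in the $2$-primary part of $H^{5}_{\nr}\left(Q/K,\QQ/\ZZ(5)\right)$ --- and that is exactly the point you assert without a valid reference. This is a genuine gap.

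The paper sidesteps the issue by changing the fibration: instead of projecting from a plane, one projects from a line $\Lambda\subset X$, so that the blow-up $\tilde X\to\PP^{4}$ is a \emph{conic} bundle with generic fibre $C$ over $K=\CC(\PP^{4})$. For conics the surjectivity $H^{i}\left(K,\mu_{n}^{\otimes i}\right)\to H^{i}_{\nr}\left(C/K,\mu_{n}^{\otimes i}\right)$ is available in \emph{every} degree $i$ (\cite[Proposition A.1]{MR1637963}), and since the cohomological dimension of $K$ is now $4<5$ the source still vanishes \cite[Chapitre II, \S 4.2, Proposition 11]{MR1324577}; the rest of the argument (birational invariance \cite{MR999316}, injection into the unramified cohomology of the generic fibre, passage to the union over $n$) is then exactly as you wrote it. To salvage your version you would need a degree-$5$ analogue of the Kahn--Sujatha theorem for quadric surfaces, which is not in the cited literature.
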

\begin{proof}
La preuve est similaire \`a celle du lemme \ref{nr4} pr\'ec\'edent\,: on choisit une droite $\Lambda$ contenue dans $X$ et l'\'eclatement $\tilde X$ de $X$ le long de $\Lambda$ admet une structure de fibration en coniques sur $\PP^{4}$, par la projection depuis $\Lambda$\,:
$$\pi: \tilde X\to \PP^{4}.$$
Soit $C$ la conique g\'en\'erique sur le corps des fonctions $K:=\CC(\PP^{4})$. Alors pour tout $n\in \NN^{*}$, on a une application surjective (\emph{cf.} \cite[Proposition A.1]{MR1637963})
$$H^{5}(K, \mu_{n}^{\otimes 5})\to H^{5}_{\nr}(C/K, \mu_{n}^{\otimes 5}).$$ Comme la dimension cohomologique de $K$ est 4, ces deux groupes sont nuls \cite[Chapitre II, \S 4.2, Proposition 11]{MR1324577}. Donc par \cite{MR999316}, le groupe $H^{5}_{\nr}(X, \mu_{n}^{\otimes 5})\cong H^{5}_{\nr}(\tilde X, \mu_{n}^{\otimes 5})$, qui est un sous-groupe de $H^{5}_{\nr}(C/K, \mu_{n}^{\otimes 5})$, est aussi nul.
\end{proof}

\section{Pr\'eparations de la preuve}\label{sect:Prepa}
Rappelons que $X\subset \PP^{6}_{\CC}$ est toujours une hypersurface cubique lisse.
\subsection{Coniveau alg\'ebrique 2}
Comme $H^{5,0}(X)=H^{4,1}(X)=0$ et $H^{3,2}(X)=21$ (\emph{cf.} \cite[Proposition 18.2]{MR1988456}), le coniveau de Hodge de $H^{5}(X, \ZZ)$ est \'egal \`a 2. Nous montrons que son coniveau alg\'ebrique est aussi 2. Plus pr\'ecis\'ement, on a l'observation suivante\,:

\begin{prop}\label{coniveau}
(1). Si  $X$ est g\'en\'erale \footnote{C'est-\`a-dire en dehors d'un sous-ensemble alg\'ebrique ferm\'e propre convenable dans l'espace de modules des hypersurfaces cubiques de dimension 5.}, il existe une famille de plans projectifs contenus dans $X$ param\'etr\'ee par une courbe projective lisse $C$, telle que l'application d'Abel--Jacobi $H^1(C, \ZZ) \to H^5(X, \ZZ)$ est un morphisme surjectif de structures de Hodge.\\
(2). Si $X$ est arbitraire (lisse), alors $H^{5}(X, \ZZ)$ est de coniveau alg\'ebrique 2, \emph{i.e.} il existe un sous-ensemble alg\'ebrique ferm\'e $Z$ de codimension 2 dans $X$, tel que l'application de restriction $H^{5}(X,\ZZ)\to H^{5}(X\backslash Z, \ZZ)$ est nulle.

\end{prop}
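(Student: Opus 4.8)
The two parts are closely linked: part (2) will follow from part (1) by a standard spreading-out / specialization argument, so the heart of the matter is producing, for a general cubic fivefold $X$, a smooth projective curve $C$ parametrizing a family of planes in $X$ whose Abel--Jacobi image exhausts $H^{5}(X,\ZZ)$ as a Hodge structure. The starting point is the Fano variety $F_{2}(X)$ of planes contained in $X$. For a general $X$ this is known (by dimension count and a Bott-vanishing / incidence-variety computation) to be a smooth projective variety; I would record its dimension and, more importantly, the structure of the universal family $\mathcal{P}\to F_{2}(X)$ with its evaluation map $q\colon \mathcal{P}\to X$ and projection $p\colon\mathcal{P}\to F_{2}(X)$, whose fibers are $\PP^{2}$'s. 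The class $[q^{*}h]\in H^{2}(\mathcal{P})$ restricts to the hyperplane class on each fiber, so $q_{*}\circ(q^{*}h)^{?}$ gives cycle-theoretic correspondences shifting degree; the relevant one here is $q_{*}p^{*}\colon H^{1}(F_{2}(X),\ZZ)\to H^{5}(X,\ZZ)$ (up to the twist built into the $\PP^{2}$-fibration, i.e. $q_{*}(p^{*}(-)\cup (q^{*}h)^{2})$), a morphism of Hodge structures of the correct weight.

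The key input is surjectivity of this map \emph{onto} $H^{5}(X,\ZZ)$ — equivalently, since $H^{5}$ has Hodge coniveau $2$ with $h^{3,2}=21$, that the image contains a sub-Hodge-structure of full rank $42$. For this I would use the incidence/cylinder correspondence between $X$ and its Fano variety together with a Hodge-theoretic computation: the transpose correspondence $p_{*}q^{*}$ sends $H^{5}(X)$ into $H^{1}(F_{2}(X))$, and one shows the composition $(q_{*}p^{*})\circ(p_{*}q^{*})$ acts as a nonzero multiple of the identity on $H^{5}(X,\QQ)$ (this is where the precise geometry of planes in a cubic fivefold enters — one needs that through a general point, or along a general such cycle, the planes sweep out enough of $X$). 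Granting this over $\QQ$, the map is surjective rationally; surjectivity over $\ZZ$ then either follows because $H^{5}(X,\ZZ)$ is torsion-free (it is, $X$ being a smooth cubic hypersurface) together with a divisibility argument, or one simply states the result for $H^{5}(X,\ZZ)$ after checking the cokernel is torsion-free. Finally, to replace $F_{2}(X)$ (or the relevant subvariety of it carrying the Hodge-theoretic content) by a \emph{curve} $C$, I would invoke the standard Lefschetz-type argument: cutting $F_{2}(X)$ by general hyperplane sections down to a curve $C$, the restriction $H^{1}(F_{2}(X),\ZZ)\to H^{1}(C,\ZZ)$ is injective with the relevant classes surviving, by the hard Lefschetz theorem and the fact that $H^{1}$ injects under hyperplane section; composing, the family of planes restricted over $C$ still has Abel--Jacobi image all of $H^{5}(X,\ZZ)$.

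For part (2), with $X$ now arbitrary smooth, the cycle $Z$ is taken to be the union of the planes in the family over $C$ (image of $\mathcal{P}|_{C}$ under $q$), which has codimension $2$ in $X$. The vanishing of $H^{5}(X,\ZZ)\to H^{5}(X\setminus Z,\ZZ)$ is equivalent, via the localization sequence and Proposition \ref{prop:TwoSS} identifying the coniveau filtration with $N^{\bullet}$, to the statement that $H^{5}(X,\ZZ)$ lies in $N^{2}$, i.e. is supported on $Z$; and being supported on a union of planes is exactly detected by the surjectivity established in part (1) (the Gysin pushforward from the disjoint planes, resolved, surjects onto $H^{5}$). To pass from general to arbitrary $X$: spread the family of planes out over the base of a family of smooth cubic fivefolds and specialize. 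The surjectivity of a map of local systems is an open condition, but one needs it at \emph{every} point, so instead I would argue that $H^{5}(X,\ZZ)$ always has Hodge coniveau $2$ (true for all smooth $X$ by the Hodge-number computation \cite[Proposition 18.2]{MR1988456}), hence the generalized Hodge conjecture prediction is consistent, and then use either (a) a direct deformation argument showing the subvariety $Z$ deforms and the vanishing is preserved under specialization by properness, or (b) the result of Bloch--Srinivas-type decomposition arguments. The cleanest route is (a): take a smooth projective family $\mathcal{X}\to B$ of cubic fivefolds with $X$ a special fiber and a general fiber, carry along the relative family of planes over a relative curve $\mathcal{C}\to B$, and use that "$H^{5}$ vanishes off $Z_{b}$" is a closed condition on $b$ (it is the vanishing of a section of a coherent sheaf, or can be checked via the degeneration of the coniveau spectral sequence which is upper-semicontinuous) — combined with its validity at the general point this forces it everywhere, in particular at $X$.

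\medskip
\noindent\textbf{Main obstacle.} The crux is the Hodge-theoretic surjectivity in part (1): showing the cylinder correspondence $q_{*}p^{*}$ from the planes hits \emph{all} of $H^{5}(X,\ZZ)$ (rank $42$), not just a sub-Hodge-structure of smaller rank. This requires a genuine geometric computation about how planes in a general cubic fivefold propagate — most naturally by identifying the self-correspondence $p_{*}q^{*}q_{*}p^{*}$ on $H^{5}(X)$ explicitly, or by relating $F_{2}(X)$ to a better-understood variety. Everything else (smoothness of $F_{2}(X)$, Lefschetz reduction to a curve, spreading out for part (2)) is standard.
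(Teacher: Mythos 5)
Your overall architecture matches the paper's (part (1) via a curve inside the Fano variety of planes, part (2) by specialization), but the two points you yourself single out as delicate are exactly where the proof lives, and neither is carried out. For part (1), the surjectivity of the cylinder map is not obtained in the paper by computing the self-correspondence $p_{*}q^{*}q_{*}p^{*}$: it is imported wholesale from Collino's theorem \cite{collino1986AJcubic5fold}, which says that for general $X$ the Fano scheme $F$ of planes is a smooth \emph{surface} and the Abel--Jacobi map $\Alb(F)\to J^{5}(X)$ is an isomorphism; by \cite[Th\'eor\`eme 12.17]{MR1988456} this gives that $q_{*}p^{*}\colon H^{3}(F,\ZZ)\to H^{5}(X,\ZZ)$ is an isomorphism of integral Hodge structures. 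Without this (or an equivalent) input, your part (1) remains a plan. Note also that the source of the correspondence is $H^{3}(F,\ZZ)$, not $H^{1}$: your degree bookkeeping with the twist $q_{*}\left(p^{*}(-)\cup(q^{*}h)^{2}\right)$ lands in $H^{7}(X)$, since $\dim\mathcal{P}=4$ and $q_{*}$ raises cohomological degree by $2$. The subsequent Lefschetz reduction to an ample curve $C\subset F$ is indeed standard and is what the paper does; what is used there is surjectivity of the Gysin map $i_{*}\colon H^{1}(C,\ZZ)\to H^{3}(F,\ZZ)$, the Poincar\'e dual of the injectivity you invoke.

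For part (2) the gap is sharper. You correctly observe that surjectivity of a map of local systems is an open, not closed, condition, so the generic statement does not specialize for free; but your proposed remedies (``vanishing of a section of a coherent sheaf'', ``upper-semicontinuity of the coniveau spectral sequence'') are assertions, not arguments. The paper's mechanism is concrete and you should supply it: take a family $\mathcal{X}\to\mathbb{D}$ over a disc with central fiber $X$ and all other fibers general, together with the relative universal family of planes restricted over a relative ample curve $\mathcal{C}\subset\mathcal{F}$, and compare the homology of the fibers with that of the total spaces. The central fibers $\mathcal{Q}_{0}$ and $\mathcal{X}_{0}$ are deformation retracts of the total spaces $\mathcal{Q}$ and $\mathcal{X}$, while for $t\neq 0$ the inclusion $\mathcal{X}_{t}\hookrightarrow\mathcal{X}$ induces an isomorphism on $H_{5}$ by Ehresmann's fibration theorem. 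Chasing the resulting commutative diagram, surjectivity of $H_{5}(\mathcal{Q}_{t},\ZZ)\to H_{5}(\mathcal{X}_{t},\ZZ)$ for one general $t$ forces surjectivity of $H_{5}(\mathcal{Q}_{0},\ZZ)\to H_{5}(\mathcal{X}_{0},\ZZ)$, i.e.\ $H^{5}(X,\ZZ)$ is supported on the $3$-dimensional image of $\mathcal{Q}_{0}$ in $X$, which is the claimed coniveau--$2$ statement. As written, your proposal identifies the right obstacles but closes neither of them.
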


\begin{proof}
Soient $F$ le sch\'ema de Fano des plans contenus dans $X$ et $P \to F$ la famille universelle des plans. 
\begin{equation*}
\xymatrix{
P \ar[r]^{q} \ar[d]^{p}& X\\
F
}
\end{equation*}
Si $X$ est g\'en\'erale, alors par le r\'esultat de Collino \cite{collino1986AJcubic5fold}, $F$ est une surface lisse et l'application d'Abel--Jacobi induit un isomorphisme entre la vari\'et\'e d'Albanese de $F$ et la jacobienne interm\'ediaire de $X$\,:
\begin{eqnarray*}
F\to &\Alb(F)& \xrightarrow{\simeq} J^5(X)\\
{[\Pi]}&\mapsto& \Phi(\Pi-\Pi_{0})
\end{eqnarray*}
o\`u $\Pi_{0}$ est un plan projectif contenu dans $X$ fix\'e.
En particulier, par \cite[Th\'eor\`eme 12.17]{MR1988456}, 
l'application $$q_{*}p^{*}\colon H^3(F, \ZZ) \xrightarrow{\cong} H^5(X, \ZZ)$$ est un isomorphisme de structures de Hodge.
On choisit $i\colon C\hookrightarrow F$ un diviseur lisse ample. 
Le th\'eor\`eme de Lefschetz implique que le compos\'e
$$H^1(C, \ZZ) \xrightarrow{i_{*}} H^3(F, \ZZ) \xrightarrow{q_{*}p^{*}}H^5(X, \ZZ)$$ est bien un morphisme surjectif de structures de Hodge.

On passe dans le cadre holomorphe. Pour $X$ arbitraire (lisse), il y a un disque analytique $\mathbb{D}$ de petit rayon dans $\CC$ de centre $0$ et une famille $\mathcal{X}\to \mathbb{D}$ d'hypersurfaces cubiques lisses dont la fibre centrale $\mathcal{X}_0\cong X$, tels que  pour tout $t \in \mathbb{D}\backslash\{0\}$, $\mathcal{X}_t$ est g\'en\'erale au sens ci-dessus. On consid\`ere le sch\'ema de Fano relatif ainsi que la famille universelle\,:
\begin{equation*}
\xymatrix{
 \mathcal{Q}:=\mathcal{P}|_\mathcal{C} \ar@{^{(}->}[r]\ar[d]\ar@{}[dr]|\square & \mathcal{P} \ar[r]^{q} \ar[d]^{p}& \mathcal{X}\ar[ddl]\\
 \mathcal{C} \ar@{^{(}->}[r]^{i} & \mathcal{F} \ar[d]& \\
 &\mathbb{D} &
}
\end{equation*}
o\`u $\mathcal{C}$ est un diviseur ample g\'en\'eral de $\mathcal{F}$.
Donc $\mathcal{C} \to \mathbb{D}$ est une famille de courbes qui param\`etrent  des plans dans les fibres de $\mathcal{X}\to \mathbb{D}$. On a alors le diagramme commutatif suivant\,:
\begin{equation*}
\xymatrix{
 & H_{5}(\mathcal{Q}_{t}, \ZZ) \ar[r]^{{q_{t}}_{*}} \ar[d]& H_{5}(\mathcal{X}_{t}, \ZZ) \ar[d]_{\simeq}\\
& H_{5}(\mathcal{Q}, \ZZ) \ar[r]^{q_{*}} & H_{5}(\mathcal{X}, \ZZ)\\
& H_{5}(\mathcal{Q}_{0}, \ZZ) \ar[r]^{{q_{0}}_{*}}  \ar[u]^{\simeq} & H_{5}(\mathcal{X}_{0}, \ZZ) \ar[u]^{\simeq}
}
\end{equation*}
o\`u $t\in \mathbb{D}\backslash\{0\}$, les fl\`eches verticales sont induites par des inclusions naturelles. 

Les fl\`eches verticales entre la deuxi\`eme et la troisi\`eme ligne sont des isomorphismes car la fibre centrale  $\mathcal{Q}_{0}$ (\emph{resp.} $\mathcal{X}_{0}$) est un r\'etract par d\'eformation de l'espace total $\mathcal{Q}$ (\emph{resp.} $\mathcal{X}$). L'autre fl\`eche \`a droite est un isomorphisme car toutes les hypersurfaces cubiques lisses sont diff\'eomorphes et la projection $\mathcal{X}\to \mathbb{D}$ est une submersion par le th\'eor\`eme de fibration d'Ehresmann (\emph{cf.}~\cite[Proposition 9.3]{MR1988456}).
Par le paragraphe pr\'ec\'edent, la fl\`eche de la premi\`ere ligne est la compos\'ee de
\[
H_5(\mathcal{Q}_t, \ZZ) \cong H^1(\mathcal{C}_t, \ZZ) \to H^3(\mathcal{F}_t, \ZZ)\to H^5(\mathcal{X}_t, \ZZ)\cong H^5(\mathcal{X}_t, \ZZ),
\]
donc surjective. 
Il en est donc de m\^eme pour la deuxi\`eme et la troisi\`eme ligne. En particulier, 
$${q_{0}}_{*}: H_{5}(\mathcal{Q}_0, \ZZ) \to H_{5}(\mathcal{X}_{0}, \ZZ)$$
est surjective. De mani\`ere \'equivalente, $H^{5}(\mathcal{X}_{0}, \ZZ)$ est support\'ee sur la pr\'eimage de $\mathcal{C}_{0}$ dans $\mathcal{P}_{0}$, qui est un sous-ensemble alg\'ebrique de dimension 3.
\end{proof}

 \subsection{D\'ecomposition de la diagonale}
 La technique de la \emph{d\'ecomposition de la diagonale} est initi\'ee par Bloch--Srinivas \cite{MR714776}. Nous avons besoin de la version raffin\'ee suivante due \`a Paranjape \cite{MR1283872} et Laterveer \cite{MR1669995}, \emph{cf.} \cite[\S 22]{MR1988456} et \cite[Theorem 3.18]{MR3186044}. 
 
 \begin{thm}[{\cite[Theorem 3.18]{MR3186044}}]\label{thm:DD}
 Soient $Y$ une vari\'et\'e  complexe projective lisse connexe de dimension $n$ et $c$ un entier positif.  Suppose que pour tout $0\leq k<c$, l'application de classe $$\cl\colon\CH_{k}(Y)_{\QQ}\hookrightarrow H^{2n-2k}(Y, \QQ)$$ est injective. Alors il existe un entier non nul $m$ et une d\'ecomposition dans $\CH^{n}(Y\times Y)$
 \begin{equation}\label{eqn:DDgeneral}
  m\Delta_{Y}=Z_{0}+Z_{1}+\cdots+Z_{c-1}+Z',
 \end{equation}
 tels que $Z_{i}$ est support\'e sur $W_{i}\times W_{i}'$ avec $\dim W_{i}=i$ et $\dim W_{i}'=n-i$, et $Z'$ est support\'e sur $Y\times T$ avec $T$ un sous-ensemble alg\'ebrique ferm\'e de codimension $\geq c$ dans $Y$.
 \end{thm}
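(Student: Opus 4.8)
The plan is to prove the theorem above by ascending induction on $c$, the case $c=1$ being the Bloch--Srinivas decomposition of the diagonal, and each step peeling off one more term $Z_{c-1}$ by a spreading-out argument over the function field of (a resolution of) the current error locus. Everything rests on the following form of the Bloch--Srinivas principle, which I isolate as Lemma~A: \emph{if $\cl\colon\CH_{k}(Y)_{\QQ}\hookrightarrow H^{2n-2k}(Y,\QQ)$ is injective, then for every extension of fields $\Omega/\CC$ the base-change map $\CH_{k}(Y)_{\QQ}\to\CH_{k}(Y_{\Omega})_{\QQ}$ is an isomorphism.} To prove Lemma~A one reduces to $\Omega=\CC(B)$ with $B$ smooth projective connected, spreads a given class $\alpha\in\CH_{k}(Y_{\Omega})$ out to $\mathcal A\in\CH_{k+\dim B}(Y\times B)$ with $\mathcal A_{\eta}=\alpha$, and notes that the closed fibres $\mathcal A_{b}$, $b\in B(\CC)$, all carry the \emph{same} class in $H^{2n-2k}(Y,\QQ)$, hence by the injectivity hypothesis are all rationally equivalent over $\QQ$ to a fixed $\mathcal A_{b_{0}}$; since a complex variety is not a countable union of proper closed subvarieties, these equivalences acquire a uniform degree bound on a dense open of $B$, so that they spread out and specialise to the generic point, giving $\alpha=(\mathcal A_{b_{0}})_{\Omega}$ in $\CH_{k}(Y_{\Omega})_{\QQ}$, i.e. surjectivity (injectivity of the base-change being elementary).

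For $c=1$, the hypothesis forces $\CH_{0}(Y)_{\QQ}=\QQ$; restricting $\Delta_{Y}$ over the generic point of the second factor gives the tautological point $\delta\in Y(\CC(Y))$, which by Lemma~A ($k=0$) satisfies $m\delta=m\,y_{0}$ in $\CH_{0}(Y_{\CC(Y)})$ for some $m\neq0$, $y_{0}\in Y(\CC)$; spreading this over a dense open $V$ of the second factor and applying the localisation sequence $\CH_{n}(Y\times(Y\setminus V))\to\CH_{n}(Y\times Y)\to\CH_{n}(Y\times V)\to 0$ yields $m\Delta_{Y}=m(\{y_{0}\}\times Y)+Z'$ with $Z'$ supported on $Y\times T$, $T$ of codimension $\geq 1$. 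This is the desired decomposition, with $Z_{0}=m(\{y_{0}\}\times Y)$.

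For the inductive step, assume $m'\Delta_{Y}=Z_{0}+\cdots+Z_{c-2}+W$ with $W$ supported on $Y\times T$ and $\dim T=n-c+1$. Treating the components of the support of $T$ separately (and taking a common multiple of the resulting integers at the end) we may assume $T$ irreducible, and discarding the components of $W$ that do not dominate $T$ (they are supported on $Y$ times a proper closed subset of $T$, hence go into the eventual error term) we may assume $W$ dominates $T$. Choose a resolution $\tau\colon\tilde T\to T$ with $\tilde T$ smooth projective of dimension $n-c+1$; since $\tau$ is birational, $W$ lifts through strict transforms to $\tilde W\in\CH_{n}(Y\times\tilde T)$ with $(\mathrm{id}_{Y}\times\tau)_{*}\tilde W=W$. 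Restricting $\tilde W$ over the generic point of $\tilde T$ produces a class $\xi\in\CH_{c-1}(Y_{L})$ with $L=\CC(\tilde T)=\CC(T)$, and by Lemma~A applied with $k=c-1$ --- which is exactly where the hypothesis on $\CH_{c-1}(Y)_{\QQ}$ enters --- $\xi$ lies in the image of $\CH_{c-1}(Y)_{\QQ}$, say $\xi=\sum_{i}\lambda_{i}\,[C_{i}]_{L}$ with each $C_{i}\subseteq Y$ irreducible of dimension $c-1$. Clearing denominators by a further nonzero integer $m''$, put $W_{c-1}:=\bigcup_{i}C_{i}$, $W_{c-1}':=T$ and $Z_{c-1}:=\sum_{i}(m''\lambda_{i})\,[C_{i}\times T]\in\CH_{n}(Y\times Y)$; then $Z_{c-1}$ is supported on $W_{c-1}\times W_{c-1}'$ with $\dim W_{c-1}=c-1$ and $\dim W_{c-1}'=n-(c-1)$, and by construction $m''W-Z_{c-1}$ restricts to $0$ over the generic point of $T$, hence --- by the localisation sequence for $Y\times T$ and $Y\times T'$ with $T'\subsetneq T$ --- is supported on $Y\times T'$ with $\dim T'\leq n-c$. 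Multiplying the previous identity by $m''$, setting $m:=m''m'$ and replacing $Z_{i}$ by $m''Z_{i}$ for $i\leq c-2$, we obtain a decomposition whose error term $Z':=m''W-Z_{c-1}$ is supported on $Y\times T'$ with $\codim T'\geq c$, completing the induction; note that the base case uses the hypothesis for $k=0$ and the successive steps use it for $k=1,\dots,c-1$.

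The main obstacle is Lemma~A: making the spreading out and the countability argument rigorous --- in particular spreading first a cycle and then a rational equivalence over a base, controlling denominators, and promoting the $\QQ$-rational equivalence from all closed fibres to the generic fibre --- is the genuinely Bloch--Srinivas-theoretic core; granted it, the rest is bookkeeping with localisation sequences, resolution of singularities and the action of correspondences. Two minor points: when $\xi=0$ the term $Z_{c-1}$ is simply $0$, and all the integer multipliers only ever get multiplied together, so the final $m$ is a well-defined nonzero integer.
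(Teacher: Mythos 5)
The paper gives no proof of Theorem \ref{thm:DD}: it is quoted directly from \cite{MR3186044}, and your argument is essentially the proof given there (going back to Bloch--Srinivas, Paranjape and Laterveer) — induction on $c$, the key input being that injectivity of $\cl$ on $\CH_{k}(Y)_{\QQ}$ forces surjectivity of $\CH_{k}(Y)_{\QQ}\to\CH_{k}(Y_{\Omega})_{\QQ}$ via spreading out and a countability argument, applied to the restriction of the error term over the generic point of its support. Your outline is correct; the only step needing real care is the one you flag yourself inside Lemma~A, namely promoting the $\QQ$-rational equivalences from all closed fibres to the generic fibre (one parametrises the equivalences by countably many varieties over $B$, finds one that dominates $B$, and descends from $\overline{\CC(B)}$ to $\CC(B)$ at the cost of a further nonzero multiple), which is exactly the standard Bloch--Srinivas mechanism.
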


 Pour les hypersurfaces cubiques de dimension 5, le th\'eor\`eme g\'en\'eral ci-dessus donne le r\'esultat suivant\,:
 
\begin{cor}\label{cor:DDcubique5}
Soit $X\subset \PP^{6}_{\CC}$ une hypersurface cubique lisse. Alors il existe un entier non nul $m$ et une d\'ecomposition  dans $\CH^{5}(X\times X)$
\begin{equation}\label{eqn:DD}
m\Delta_{X}=m~x\times X+m~ l\times h+Z'
\end{equation}
o\`u $x$ est un point arbitraire de $X$, $l$ est une droite projective contenue dans $X$ arbitraire, $h=c_{1}\left(\mathcal{O}_{X}(1)\right)$ est la classe de section hyperplane et $Z'$ est support\'e sur $X\times T$ avec $T$ un sous-ensemble alg\'ebrique ferm\'e de codimension $\geq 2$ dans $X$.
\end{cor}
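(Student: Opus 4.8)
The plan is to apply Theorem~\ref{thm:DD} to $Y=X$ with $c=2$, and then to recognise the three pieces it produces by means of the low-codimensional Chow groups of $X$ recalled in the introduction.

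First one checks the hypothesis of Theorem~\ref{thm:DD}: for $k=0$ and $k=1$ the cycle class map $\cl\colon\CH_{k}(X)_{\QQ}\hookrightarrow H^{10-2k}(X,\QQ)$ is injective. This is clear, since $\CH_{0}(X)=\ZZ$ ($X$ is unirational) and $\CH_{1}(X)=\CH^{4}(X)=\ZZ\cdot[l]$, with $l$ a line; in both cases the map is in fact an isomorphism over $\QQ$. Theorem~\ref{thm:DD} therefore gives a nonzero integer $m$ and an equality
\[
m\Delta_{X}=Z_{0}+Z_{1}+Z'\quad\text{dans }\CH^{5}(X\times X),
\]
with $Z_{0}$ supported on $W_{0}\times X$ ($\dim W_{0}=0$), $Z_{1}$ supported on $W_{1}\times W_{1}'$ ($\dim W_{1}=1$, $\dim W_{1}'=4$), and $Z'$ supported on $X\times T$ with $\codim_{X}T\geq 2$.

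Next one rewrites $Z_{0}$ and $Z_{1}$ as multiples of $x\times X$ and $l\times h$. Since $W_{0}$ is a finite set of points, $Z_{0}$ is an integral combination of the $5$-dimensional irreducible cycles $\{x_{i}\}\times X$, which are mutually rationally equivalent in $\CH^{5}(X\times X)$ because $\CH_{0}(X)=\ZZ$; hence $Z_{0}\equiv m_{0}\,(x\times X)$ for some $m_{0}\in\ZZ$ and any point $x\in X$. Similarly the $5$-dimensional components of $W_{1}\times W_{1}'$ are products $C\times S$ with $C$ a curve and $S$ a divisor of $X$; using $\CH_{1}(X)=\ZZ\cdot[l]$, $\CH_{4}(X)=\CH^{1}(X)=\ZZ\cdot h$ and the bilinearity of the exterior product $\CH_{1}(X)\times\CH_{4}(X)\to\CH_{5}(X\times X)$, one gets $Z_{1}\equiv a\,(l\times h)$ for some $a\in\ZZ$ and any line $l$. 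Thus $m\Delta_{X}=m_{0}\,(x\times X)+a\,(l\times h)+Z'$.

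Finally one shows $m_{0}=a=m$ by intersecting this identity with well-chosen auxiliary $5$-cycles on $X\times X$ and comparing the degrees of the resulting $0$-cycles (a rational equivalence invariant). Taking $X\times\{q\}$ with $q\in X$ a general point (so $q\notin T$): the $(l\times h)$-term drops out because $h\cdot[q]\in\CH^{6}(X)=0$, the term $Z'\cdot(X\times\{q\})$ vanishes as $\supp Z'\subseteq X\times T$, while $\deg(\Delta_{X}\cdot(X\times\{q\}))=\deg((x\times X)\cdot(X\times\{q\}))=1$; this yields $m=m_{0}$. Taking instead $h_{0}\times B$, with $h_{0}$ a general hyperplane section and $B=X\cap\PP^{2}$ a general plane section (a cubic curve, with $[B]=h^{4}$ in $\CH_{1}(X)$): the $(x\times X)$-term drops out (choose $x\notin h_{0}$), the term $Z'\cdot(h_{0}\times B)$ vanishes because a general $\PP^{2}\subset\PP^{6}$ misses the at most $3$-dimensional set $T$, and the Künneth formula gives $\deg(\Delta_{X}\cdot(h_{0}\times B))=\int_{X}h^{5}=3$ and $\deg((l\times h)\cdot(h_{0}\times B))=(\int_{X}h\cdot[l])\cdot(\int_{X}h^{5})=3$; hence $a=m$. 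Since $x\times X$ and $l\times h$ depend only on the classes $[x]\in\CH_{0}(X)$ and $[l]\in\CH_{1}(X)$, the identity $m\Delta_{X}=m\,(x\times X)+m\,(l\times h)+Z'$ holds for arbitrary $x$ and $l$. The only step with genuine content, beyond bookkeeping with the Chow groups, is this last one — choosing test cycles that isolate each term and noting that a general plane section of $X$ avoids the codimension-$2$ locus $T$, which is what forces both coefficients to equal $m$.
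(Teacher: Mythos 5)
Your proposal is correct and follows the paper's proof in all essentials: verify the hypothesis of Theorem~\ref{thm:DD} with $c=2$ via $\CH_{0}(X)\cong\ZZ$ and $\CH_{1}(X)\cong\ZZ\cdot[l]$, identify $Z_{0}$ and $Z_{1}$ as multiples of $x\times X$ and $l\times h$ using those same Chow groups together with $\CH^{1}(X)\cong\ZZ\cdot h$, then pin down the coefficients. The only (cosmetic) difference is in the last step: the paper lets both sides of the decomposition act as correspondences on the classes of a point and of a line, whereas you intersect with the test cycles $X\times\{q\}$ and $h_{0}\times B$ and compare degrees --- the first of these is literally the paper's computation on $[x]$, and the second is an equivalent degree-counting variant of its computation on $[l]$.
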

\begin{proof}
L'hypoth\`ese du th\'eor\`eme \ref{thm:DD} est v\'erifi\'ee pour $c=2$ car $\CH_{0}(X)$ et $\CH_{1}(X)$ sont des groupes cycliques infinis engendr\'es respectivement par un point et une droite dans $X$ (\emph{cf.} \S \ref{sect:Intro} Introduction). On a donc une d\'ecomposition (\ref{eqn:DDgeneral}) avec $c=2$. Pour d\'eterminer $Z_{0}$ et $Z_{1}$, on utilise encore une fois les descriptions de $\CH_{0}(X)$ et $\CH_{1}(X)$ plus le fait que $\CH^{1}(X)\cong \ZZ\cdot h$, pour obtenir que $Z_{0}=m_{1}~x\times X$ et $Z_{1}=m_{2}~ l\times h$ avec deux entiers $m_{1}, m_{2}$. Finalement, pour voir que $m_{1}=m_{2}=m$, on fait agir (\ref{eqn:DDgeneral}) par correspondances sur la classe d'un point $x$ et la classe d'une droite $l$. D'une part, $\Delta_{X}$ agit toujours par l'identit\'e et d'autre part, $(x\times X)^{*}(x)=x$, $(x\times X)^{*}(l)=0$, $(l\times h)^{*}(x)=0$, $(l\times h)^{*}(l)=l$ et ${Z'}^{*}(x)={Z'}^{*}(l)=0$ par raison de dimension, on obtient
$m~ x=m_{1}~ x$ et $ m~ l=m_{2}~ l$. D'o\`u, $m=m_{1}=m_{2}$.
\end{proof}

\section{Preuve de l'annulation de $\Griff^{3}(X)$}\label{sect:Griff}
On d\'emontre dans cette section le th\'eor\`eme \ref{main} (1). Commen\c cons par la r\'einterpr\'etation cohomologique suivante du groupe de Griffiths des cycles de codimension 3. On utilise les notations du \S \ref{sect:Hnr}.

\begin{prop}\label{prop:Griffiths}
Soit $X$ une hypersurface cubique lisse dans $\PP^{6}_{\CC}$.
On a un isomorphisme $H^1(X, \HH^4(\ZZ(4))) \cong \Griff^3(X)$.
\end{prop}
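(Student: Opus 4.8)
The plan is to locate both sides of the claimed isomorphism inside the group $H^3\bigl(X,\HH^3(\ZZ(3))\bigr)$: on the cohomological side I would identify $H^1\bigl(X,\HH^4(\ZZ(4))\bigr)$ with the kernel of the edge map $H^3\bigl(X,\HH^3(\ZZ(3))\bigr)\to H^6(X,\ZZ)$ coming from the coniveau spectral sequence, and on the cycle side I would exhibit $H^3\bigl(X,\HH^3(\ZZ(3))\bigr)$ as a natural quotient of $\CH^3(X)$ through the Gersten resolution \eqref{eqn:Resolution}, whose matching kernel one recognises as $\CH^3(X)_{\alg}$. For the first point, run the coniveau spectral sequence $E_2^{p,q}=H^p\bigl(X,\HH^q(\ZZ)\bigr)\Rightarrow H^{p+q}(X,\ZZ)$, which by Proposition~\ref{prop:TwoSS} is Deligne's. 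Corollary~\ref{cor:Vanishing} kills every $E_2^{p,q}$ with $p>q$; the vanishing $H^{>0}_{\nr}(X,\ZZ)=0$ (rational connectedness) kills $E_2^{0,5}$; and Proposition~\ref{coniveau}(2), i.e.\ $N^2H^5(X,\ZZ)=H^5(X,\ZZ)$, forces $E_\infty^{1,4}=0$, so $d_2\colon E_2^{1,4}\to E_2^{3,3}$ is injective with image $\ker\bigl(E_2^{3,3}\to E_\infty^{3,3}\bigr)$. Since $H^6(X,\ZZ)$ is generated by the class of a plane, $E_\infty^{3,3}=N^3H^6(X,\ZZ)=H^6(X,\ZZ)$, and no differential other than this $d_2$ meets the spot $E^{3,3}$; one thus reads off a short exact sequence
\[
0\longrightarrow H^1\bigl(X,\HH^4(\ZZ(4))\bigr)\xrightarrow{\ d_2\ } H^3\bigl(X,\HH^3(\ZZ(3))\bigr)\longrightarrow H^6(X,\ZZ)\longrightarrow 0 ,
\]
whose third map is the edge homomorphism.

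For the second point, the terms of the Gersten resolution \eqref{eqn:Resolution} of $\HH^3(\ZZ(3))$ are flasque \cite{MR0412191}, so $H^3\bigl(X,\HH^3(\ZZ(3))\bigr)$ is the cokernel of the last differential $\partial\colon\bigoplus_{W\in X^{(2)}}H^1\bigl(\CC(W),\ZZ(1)\bigr)\to\bigoplus_{W\in X^{(3)}}\ZZ$, the right-hand side being the group of $2$-cycles on $X$. The natural class map $\CC(W)^{*}\to H^1\bigl(\CC(W),\ZZ(1)\bigr)$ is compatible with taking divisors, hence induces a surjection $\cl_{\HH}\colon\CH^3(X)\twoheadrightarrow H^3\bigl(X,\HH^3(\ZZ(3))\bigr)$ whose composite with the edge map is the usual topological cycle class. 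Restricting to homologically trivial cycles and using the exact sequence above, $\cl_{\HH}$ induces a surjection $\CH^3(X)_{\ho}\twoheadrightarrow H^1\bigl(X,\HH^4(\ZZ(4))\bigr)$, and the proposition reduces to the assertion that its kernel---the subgroup of $\CH^3(X)$ consisting of the classes of cycles of the shape $\sum_{W}\partial_W(\alpha_W)$---equals $\CH^3(X)_{\alg}$.

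That this subgroup lies in $\CH^3(X)_{\alg}$: for a fixed $W\in X^{(2)}$, choosing a resolution $\widetilde W\to W$ and (after further blow-ups) arranging that the boundary in question becomes a simple normal crossing divisor, the Gysin exact sequence identifies each element of the image of $\partial_W$ with the pushforward of a divisor on $\widetilde W$ lying in $\ker\bigl(c_1\colon\Pic(\widetilde W)\to H^2(\widetilde W,\ZZ)\bigr)=\Pic^0(\widetilde W)$, i.e.\ with the pushforward of an algebraically trivial divisor. Conversely, every algebraically trivial $2$-cycle $z$ on $X$ is of the form $\Gamma_{*}([p]-[q])$ for a smooth projective curve $C$ and an irreducible correspondence $\Gamma\subset C\times X$; the image $W$ of $\Gamma$ in $X$ has codimension $\geq 2$, and when it has codimension exactly $2$ the fibres $(\Gamma_t)_{t\in C}$ form an algebraic family of divisors on $\widetilde W$, so that $z=[\Gamma_p]-[\Gamma_q]$ is the difference of two of them and its class therefore lies in the image of $\partial_W$ (the cases of larger codimension contributing $z=0$). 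Combining the two inclusions, $\Griff^3(X)=\CH^3(X)_{\ho}/\CH^3(X)_{\alg}\xrightarrow{\ \cong\ }H^1\bigl(X,\HH^4(\ZZ(4))\bigr)$.

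I expect the main difficulty to be exactly this identification of kernels, and within it the inclusion ``image of $\partial\subseteq\CH^3(X)_{\alg}$'': one must carefully match the topological residue maps of the Gersten complex with genuine Gysin boundary maps on a smooth projective model of each codimension-$2$ subvariety, and control the blow-ups needed to make the relevant boundaries simple normal crossing, so that the ``defect'' divisors produced really are homologically---hence, being in $\Pic^0$, algebraically---trivial. Let me note that the decomposition of the diagonal of Corollary~\ref{cor:DDcubique5} is not needed for this proposition; it will enter only afterwards, to show that $H^1\bigl(X,\HH^4(\ZZ(4))\bigr)$ actually vanishes.
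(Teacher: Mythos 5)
Your argument follows the paper's proof essentially verbatim: the same Leray/coniveau spectral sequence analysis, using Corollaire \ref{cor:Vanishing}, the vanishing of $H^{>0}_{\nr}(X,\ZZ)$, and Propositions \ref{prop:TwoSS} and \ref{coniveau} to identify $H^1\bigl(X,\HH^4(\ZZ(4))\bigr)$ with $\ker\bigl(H^3(X,\HH^3(\ZZ(3)))\to H^6(X,\ZZ)\bigr)$ via $d_2$. The only divergence is that for the identification of this kernel with $\Griff^3(X)$ the paper simply invokes Bloch--Ogus \cite[Corollary 7.4]{MR0412191} (namely $H^3(X,\HH^3(\ZZ))\cong\CH^3(X)/\alg$, compatibly with the cycle class map), whereas you sketch a proof of that statement from the Gersten resolution; your sketch is sound in outline, and the delicate points you flag (matching topological residues with Gysin maps on resolutions of the codimension-$2$ subvarieties) are precisely what is handled in \emph{loc.~cit.}
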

\begin{proof}
 Consid\'erons la suite spectrale de Leray associ\'ee au morphisme continu de sites $\pi: X_{\cl}\to X_{\Zar}$\,:
$$E_2^{p, q}=H^p\left(X, \HH^q(\ZZ)\right) \Rightarrow H^{p+q}(X, \ZZ).$$ 
D'une part, rappelons les annulations du corollaire \ref{cor:Vanishing}: $H^{p}\left(X, \HH^{q}(\ZZ)\right)=0$ pour tout $p>q$\,; d'autre part, d'apr\`es \cite[Proposition 3.3 (i)]{CTVoisin2012IHC}, on a $H^0\left(X, \HH^q(\ZZ)\right)=0$ pour tout $q>0$. En regardant la suite spectrale, on obtient une suite exacte
\begin{equation}\label{eqn:Exact}
0\to H^{2}\left(X, \HH^{3}(\ZZ)\right)\to H^{5}(X, \ZZ)\to H^{1}\left(X, \HH^{4}(\ZZ)\right)\xrightarrow{d_{2}} H^{3}\left(X, \HH^{3}(\ZZ)\right)\to H^{6}(X, \ZZ)
\end{equation}
Gr\^ace \`a la  proposition \ref{prop:TwoSS}, la filtration d'aboutissement est celle de coniveau alg\'ebrique, donc l'image de la premi\`ere fl\`eche $H^2\left(X, \HH^3(\ZZ)\right) \to H^5(X, \ZZ)$ est 
$$N^2H^5(X, \ZZ):=\sum_{\substack{Z \subset X \text{ Zariski ferm\'e}\\ \codim Z\geq 2}}\ker\left(H^{5}(X,\ZZ)\to H^{5}(X\backslash Z, \ZZ)\right),$$
qui est simplement $H^{5}(X, \ZZ)$,  par la proposition \ref{coniveau}. Autrement dit, le morphisme $H^2(X, \HH^3(\ZZ)) \to H^5(X, \ZZ)$ est surjectif (en fait un isomorphisme). 

Par cons\'equent, la suite exacte (\ref{eqn:Exact}) se simplifie et implique que $d_{2}$ induit un isomorphisme
$$H^{1}\left(X, \HH^{4}(\ZZ)\right)\simeq \ker\left(H^{3}\left(X, \HH^{3}(\ZZ)\right)\to H^{6}(X, \ZZ)\right).$$
Or \cite[Corollary 7.4]{MR0412191} \'etablit un isomorphisme $H^{3}\left(X, \HH^{3}(\ZZ)\right)\cong \CH^{3}(X)/\alg$, le groupe des cycles de codimension 3 modulo l'\'equivalence alg\'ebrique. De plus, l'application $H^{3}\left(X, \HH^{3}(\ZZ)\right)\to H^{6}(X, \ZZ)$ s'identifie, \emph{via} cet isomorphisme, \`a l'application classe de cycles, dont le noyau est $\Griff^{3}(X)$ par d\'efinition. On en d\'eduit l'isomorphisme cherch\'e.
\end{proof}

\begin{cor}\label{cor:SansTorsion}
Le troisi\`eme groupe de Griffiths $\Griff^3(X)$ est sans torsion.
\end{cor}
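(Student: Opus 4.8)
The plan is to read off the result from the cohomological interpretation just established. By Proposition~\ref{prop:Griffiths} there is a canonical isomorphism $\Griff^{3}(X)\cong H^{1}(X,\HH^{4}(\ZZ(4)))$; since over $\CC$ the Tate twist $\ZZ(4)$ is abstractly isomorphic to the constant sheaf $\ZZ$ (it is just bookkeeping for the Hodge/Galois structure, irrelevant to a torsion question), it therefore suffices to prove that $H^{1}(X,\HH^{4}(\ZZ))$ is torsion-free, that is, $H^{1}(X,\HH^{4}(\ZZ))[n]=0$ for every $n\in\NN^{*}$.

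First I would feed $i=4$, $j=0$ into Lemma~\ref{lemma:SES}, which produces the short exact sequence
\[
0\to H^{0}(X,\HH^{4}(\ZZ))/n\to H^{0}(X,\HH^{4}(\ZZ/n))\to H^{1}(X,\HH^{4}(\ZZ))[n]\to 0 .
\]
The middle group is by definition the unramified cohomology $H^{4}_{\nr}(X,\ZZ/n)$, and it is a subgroup of $H^{4}_{\nr}(X,\QQ/\ZZ)=\bigcup_{m}H^{4}_{\nr}(X,\ZZ/m)$ (the union being an increasing one, thanks to the Bloch--Kato injectivities recalled in \S\ref{subsect:VanishingHnr}). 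By Lemma~\ref{nr4} this group is zero, hence the middle term of the sequence vanishes, hence so does the right-hand term: $H^{1}(X,\HH^{4}(\ZZ))[n]=0$. Letting $n$ range over $\NN^{*}$ shows that $H^{1}(X,\HH^{4}(\ZZ))$, and therefore $\Griff^{3}(X)$, is torsion-free.

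The argument uses only formal properties of the Leray spectral sequence of $\pi\colon X_{\cl}\to X_{\Zar}$ (packaged in Lemma~\ref{lemma:SES}) together with the single geometric input $H^{4}_{\nr}(X,\QQ/\ZZ)=0$ of Lemma~\ref{nr4}; that vanishing, which rests on the Kahn--Sujatha theorem on quadric surface bundles, is the only substantial ingredient and hence the \emph{main obstacle} — everything else is a two-line diagram chase. As a complementary remark (not needed for the corollary), one also has $H^{0}(X,\HH^{4}(\ZZ))=H^{4}_{\nr}(X,\ZZ)=0$ because $X$ is rationally connected, so the left-hand term of the displayed sequence already vanishes independently of Lemma~\ref{nr4}, and the sequence in fact identifies $H^{4}_{\nr}(X,\ZZ/n)$ with $H^{1}(X,\HH^{4}(\ZZ))[n]$.
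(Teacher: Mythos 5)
Your proof is correct and is essentially the paper's own argument: both apply Lemma~\ref{lemma:SES} with $i=4$, $j=0$ to relate $H^{1}(X,\HH^{4}(\ZZ))[n]$ to $H^{4}_{\nr}(X,\ZZ/n)$, kill the latter via Lemma~\ref{nr4}, and conclude through the isomorphism of Proposition~\ref{prop:Griffiths}. The only (immaterial) difference is bookkeeping: the paper first uses $H^{4}_{\nr}(X,\ZZ)=0$ to identify $H^{4}_{\nr}(X,\mu_n^{\otimes 4})$ with the $n$-torsion and then invokes Lemma~\ref{nr4}, whereas you kill the middle term of the sequence directly --- which is exactly the ``complementary remark'' you make yourself.
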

\begin{proof}
Par le lemme \ref{lemma:SES} on a une suite exacte courte
$$0\to H^{4}_{\nr}(X, \ZZ(4))/n\to H^{4}_{\nr}(X, \mu_n^{\otimes 4})\to H^{1}(X, \HH^{4}(\ZZ(4)))[n]\to 0.$$
Comme $H^{4}_{\nr}(X, \ZZ(4))=0$ par \cite[Proposition 3.3 (i)]{CTVoisin2012IHC}, on a que pour tout $n\in \NN^{*}$, $$H^{4}_{\nr}(X, \mu_n^{\otimes 4})\simeq H^{1}(X, \HH^{4}(\ZZ(4)))[n].$$
Il en r\'esulte que $H^4_{\nr}(X, \QQ/\ZZ(4))$ est le sous-groupe de torsion du groupe $H^1(X, \HH^4(\ZZ(4)))$, qui s'identifie \`a celui du groupe $ \Griff^3(X)$ par la proposition \ref{prop:Griffiths}. Donc le lemme \ref{nr4} nous permet de conclure. 
\end{proof}

\begin{rem}
Les arguments dans la proposition \ref{prop:Griffiths} et le corollaire \ref{cor:SansTorsion} sont essentiellement dus \`a Voisin \cite[Corollary 1.3]{MR2987669}.
\end{rem}

\begin{prop}\label{prop:DeTorsion}
Le troisi\`eme groupe de Griffiths $\Griff^3(X)$ est de torsion.
\end{prop}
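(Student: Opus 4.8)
The plan is to run a decomposition-of-the-diagonal argument in the spirit of Bloch--Srinivas, the one genuinely new input being that divisors on a smooth projective variety have trivial Griffiths group. Fix a class $\alpha\in\CH^3(X)_{\ho}=\CH_2(X)_{\ho}$ and use the decomposition $m\Delta_X=m\,x\times X+m\,l\times h+Z'$ provided by Corollaire~\ref{cor:DDcubique5}, in which $Z'$ is supported on $X\times T$ with $\codim_X T\geq 2$, so $\dim T\leq 3$. Letting these three correspondences act on $\alpha$, the first two contribute nothing, for dimension reasons: viewing $x\times X=p_1^*[x]$ and $l\times h=p_1^*[l]\cdot p_2^*h$ as cycles on $X\times X$, one gets $(x\times X)_*\alpha=p_{2*}\,p_1^*([x]\cdot\alpha)$ with $[x]\cdot\alpha\in\CH_{-3}(X)=0$, and $(l\times h)_*\alpha=h\cdot p_{2*}\,p_1^*([l]\cdot\alpha)$ with $[l]\cdot\alpha\in\CH_{-2}(X)=0$. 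Hence $m\alpha=Z'_*\alpha$ in $\CH^3(X)$, and since $Z'$ is supported on $X\times T$, the cycle $Z'_*\alpha=p_{2*}(Z'\cdot p_1^*\alpha)$ is supported on $T$.

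Next I would transport this computation to a smooth model of $T$. Pick a resolution of singularities $r\colon\widetilde T\to T$ with $\widetilde T$ smooth projective, and set $\iota=(T\hookrightarrow X)\circ r\colon\widetilde T\to X$. Since $r$ is proper and birational, $(\mathrm{id}_X\times r)_*\colon\CH_*(X\times\widetilde T)_\QQ\to\CH_*(X\times T)_\QQ$ is surjective; as $Z'$ is the pushforward of a cycle on $X\times T$, there exists $\widetilde{Z'}\in\CH^3(X\times\widetilde T)_\QQ$ with $(\mathrm{id}_X\times\iota)_*\widetilde{Z'}=Z'$, and the projection formula then yields $Z'_*\alpha=\iota_*\beta$ in $\CH^3(X)_\QQ$, where $\beta:=\widetilde{Z'}_*\alpha\in\CH_2(\widetilde T)_\QQ=\CH^1(\widetilde T)_\QQ$.

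The crucial point is that $\beta$ is automatically homologically trivial on $\widetilde T$: the cycle class map commutes with the action of the correspondence $\widetilde{Z'}$ between the smooth projective varieties $X$ and $\widetilde T$, whence $\cl(\beta)=\widetilde{Z'}_*\cl(\alpha)=0$ because $\alpha\in\CH^3(X)_{\ho}$. But $\beta$ is a ($\QQ$-)divisor class on the smooth projective threefold $\widetilde T$, and for divisors homological and algebraic equivalence coincide (theory of the Picard variety: $\CH^1(\widetilde T)_{\ho}=\CH^1(\widetilde T)_{\alg}=\Pic^0(\widetilde T)$); hence $\cl(\beta)=0$ forces $\beta\in\CH^1(\widetilde T)_{\alg}\otimes\QQ$. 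As pushforward preserves algebraic triviality, $m\alpha=\iota_*\beta$ lies in $\CH^3(X)_{\alg}\otimes\QQ$, so $m$ annihilates the class of $\alpha$ in $\Griff^3(X)\otimes\QQ$; since $\alpha$ was arbitrary and $\Griff^3(X)\otimes\QQ$ is a $\QQ$-vector space, it vanishes, i.e. $\Griff^3(X)$ is of torsion. The only non-formal ingredient is this last observation — that the auxiliary divisor class one lands on is forced to be homologically trivial, after which $\Griff^1(\widetilde T)=0$ closes the loop; everything else is the routine calculus of correspondences, and the components of $T$ of dimension $<3$ cause no trouble, since there the cycle class map is injective on $\CH_2$, forcing $\beta=0$.
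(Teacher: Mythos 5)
Your argument is correct and is essentially the paper's own proof: both let the refined decomposition of the diagonal act on a homologically trivial $2$-cycle $\alpha$, kill the first two terms for dimension reasons, transport $Z'_*\alpha$ to a homologically trivial divisor class on a desingularization $\tilde T$ of $T$, and conclude via $\CH^{1}(\tilde T)_{\ho}=\CH^{1}(\tilde T)_{\alg}$ that a multiple of $\alpha$ is algebraically trivial. The only cosmetic difference is that you lift $Z'$ to $X\times\tilde T$ with $\QQ$-coefficients and conclude $\Griff^{3}(X)\otimes\QQ=0$, whereas the paper clears the denominator by an auxiliary integer $m'$ and keeps the whole computation integral.
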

\begin{proof}
On utilise la d\'ecomposition raffin\'ee de la diagonale (\ref{eqn:DD}) dans le corollaire \ref{cor:DDcubique5}. On peut supposer que $T$ est de dimension pure 3, en g\'en\'eral r\'eductible. Soient $\tilde T\to T$ une d\'esingularisation et $\tilde Z'\in \CH_{5}(X\times \tilde T)$ tel que son image dans $\CH_{5}(X\times X)$ est $m'Z'$ avec certain entier $m'$ . Notons $\tilde i: \tilde T\to X$ le compos\'e de la d\'esingularisation et l'inclusion. On \'ecrit $\tilde T=\coprod_{j} \tilde T_{j}$ la d\'ecomposition en composantes connexes.

Soit $\alpha\in \CH^{3}(X)_{\ho}$ un 2-cycle homologiquement trivial quelconque. Faisons agir sur $\alpha$ les correspondances apparaissant dans (\ref{eqn:DD}): on a $\Delta_{*}(\alpha)=\alpha$, $(x\times X)_{*}(\alpha)=(l\times h)_{*}(\alpha)=0$ par raison de dimension, et $m'{Z'}_{*}(\alpha)={\tilde i}_{*}\circ {\tilde {Z'}}_{*}(\alpha)$. On obtient
$$mm'\alpha={\tilde i}_{*}\left(\tilde Z'_{*}(\alpha)\right).$$
Comme $\tilde Z'_{*}(\alpha)$ est un diviseur homologue \`a 0 sur $\tilde T$, il est alg\'ebriquement \'equivalent \`a 0 sur $\tilde T$.
Donc $mm'\alpha$ est aussi alg\'ebriquement \'equivalent \`a 0, \emph{i.e.} $\alpha$ est de torsion dans $\Griff^{3}(X)$.
\end{proof}

La combinaison du corollaire \ref{cor:SansTorsion} et de la proposition \ref{prop:DeTorsion} implique que $\Griff^{3}(X)=0$. La partie (1) du th\'eor\`eme \ref{main} est d\'emontr\'ee. \qed

\section{Preuve de l'isomorphisme d'Abel--Jacobi}\label{sect:IsomAJ}
Cette section est consacr\'ee \`a la preuve du th\'eor\`eme \ref{main} (2) qui dit que l'application d'Abel--Jacobi $$\Phi: \CH^{3}(X)_{\alg}\to J^{5}(X)$$ est un isomorphisme.

Rappelons que $\CH^{3}(X)_{\alg}$ est un groupe divisible par le lemme standard suivant\,:
\begin{lem}[{\cite[P.10 Lemma 1.3]{MR2723320}}]\label{lemma:divisible}
Le groupe de Chow des cycles alg\'ebriquement triviaux  est divisible.
\end{lem}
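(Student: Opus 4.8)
Le plan est de ramener cette propriété bien connue (\emph{cf.}~\cite{MR2723320}) à la divisibilité de la jacobienne d'une courbe projective lisse. Soit $z\in \CH^{k}(X)_{\alg}$, où $X$ est une variété projective lisse et $k\geq 0$ arbitraires. Je commencerais par rappeler que la trivialité algébrique est toujours témoignée par une courbe: par définition, $z$ est une combinaison de différences de la forme $W_{t_{1}}-W_{t_{2}}$ pour une variété de paramètres lisse connexe $T$, un cycle $W\in \CH^{k}(T\times X)$ et des points $t_{1},t_{2}\in T$; en prenant à travers $t_{1}$ et $t_{2}$ des courbes génériques obtenues comme intersections complètes de sections hyperplanes (Bertini), en reliant les points de base par une chaîne finie de telles courbes, puis en normalisant, on se ramène au cas où la variété de paramètres est une courbe projective lisse connexe $C$. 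Autrement dit, il existe une telle courbe $C$, des points $a,b\in C(\CC)$ et une correspondance $\Gamma\in \CH^{k}(C\times X)$ avec $z=\Gamma_{*}(a-b)$.

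Fixons ensuite $n\in \NN^{*}$. Le cycle $a-b$ est un diviseur de degré $0$ sur $C$, donc définit un point du groupe $\Pic^{0}(C)$. Comme $C$ est projective lisse, $\Pic^{0}(C)$ est la jacobienne de $C$, une variété abélienne, donc un groupe divisible: il existe ainsi un diviseur $c$ de degré $0$ sur $C$ tel que $nc$ soit rationnellement équivalent à $a-b$ dans $\CH^{1}(C)$. Je poserais alors $w:=\Gamma_{*}(c)\in \CH^{k}(X)$. Le cycle $c$ étant de degré $0$, il est algébriquement équivalent à $0$ sur $C$, et comme l'action d'une correspondance préserve l'équivalence algébrique, on a $w\in \CH^{k}(X)_{\alg}$; enfin $nw=\Gamma_{*}(nc)=\Gamma_{*}(a-b)=z$, ce qui montre que $z$ est divisible par $n$ dans $\CH^{k}(X)_{\alg}$.

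Le seul point un peu technique est la réduction de la première étape à une courbe de paramètres lisse: il faut s'assurer que la restriction du cycle universel $W$ aux courbes choisies reste bien définie — ce qui vaut pour des sections hyperplanes suffisamment générales, par le lemme de déplacement — et que l'on peut effectivement relier $t_{1}$ et $t_{2}$ par une chaîne finie de telles courbes dans $T$. Tout le reste de l'argument est purement formel; c'est d'ailleurs pourquoi l'énoncé vaut pour toute variété projective lisse et en tout degré.
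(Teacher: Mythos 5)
Votre démonstration est correcte et suit essentiellement l'argument classique de la référence citée (le texte ne donne d'ailleurs aucune preuve de ce lemme et se contente de renvoyer à \cite{MR2723320})\,: réduction au cas d'une courbe projective lisse de paramètres, puis transport par la correspondance de la divisibilité de la jacobienne. Rien à ajouter.
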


On d\'emontre d'abord que $\Phi$ est une `isog\'enie', ou, dans le langage de \cite{MR714776}, que $\CH^{3}(X)_{\alg}$ est \emph{faiblement repr\'esentable}. 

\begin{prop}\label{prop:Isogenie}
L'application d'Abel--Jacobi $\Phi: \CH^{3}(X)_{\alg}\to J^{5}(X)$ est surjective avec un noyau fini.
\end{prop}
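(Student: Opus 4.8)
La stratégie est d'exploiter la décomposition raffinée de la diagonale (\ref{eqn:DD}) du corollaire \ref{cor:DDcubique5} pour ramener l'étude de $\CH^3(X)_{\alg}$ à celle d'un groupe $\CH_1$ d'une surface lisse, où l'application d'Abel--Jacobi est classiquement bien comprise. Plus précisément, en reprenant les notations de la preuve de la proposition \ref{prop:DeTorsion} — $\tilde T\to T$ une désingularisation de $T$ (de dimension pure $2$ ici, puisque $T$ est de codimension $\geq 2$ dans la surface... non : $T$ est de codimension $\geq 2$ dans $X$ donc de dimension $\leq 3$ ; on peut le supposer de dimension pure $3$), $\tilde i\colon \tilde T\to X$ le morphisme composé, et $\tilde Z'\in \CH_5(X\times \tilde T)$ relevant $m'Z'$ — on obtient pour tout $\alpha\in\CH^3(X)_{\alg}$ l'égalité $mm'\alpha = \tilde i_*(\tilde Z'_*(\alpha))$, où $\tilde Z'_*(\alpha)$ est une $1$-classe sur la variété projective lisse $\tilde T$ de dimension $3$, algébriquement triviale (car $\alpha$ l'est).

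D'abord je montrerais la surjectivité de $\Phi$. La jacobienne intermédiaire $J^5(X)$ est engendrée par des courbes algébriques tracées dans $X$ : en effet, par la proposition \ref{coniveau}, $H^5(X,\ZZ)$ est de coniveau algébrique $2$, supporté sur (l'image de) la préimage dans $P$ d'une courbe $C\subset F$ ; l'application d'Abel--Jacobi associée à cette famille de $2$-cycles paramétrée par $C$ (en fait par sa jacobienne $J(C)=\Alb(C)$, via $\CH^3(X)_{\alg}$) est un morphisme de variétés abéliennes $J(C)\to J^5(X)$ qui est surjectif car sa différentielle est la surjection de structures de Hodge de la proposition \ref{coniveau}. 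Comme toute classe de $J(C)$ provient d'un $0$-cycle de degré $0$ sur $C$, donc via la famille d'un élément de $\CH^3(X)_{\alg}$, la compatibilité entre Abel--Jacobi pour les courbes et pour $X$ (\emph{cf.} \cite[\S 12.1]{MR1988456}) donne la surjectivité de $\Phi$.

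Ensuite, pour la finitude du noyau, on raisonne via $\tilde T$. L'application d'Abel--Jacobi $\Phi_{\tilde T}\colon \CH_1(\tilde T)_{\alg}\to J(\tilde T)$, où $J(\tilde T):=J^{2\cdot 3-3}(\tilde T)=J^3(\tilde T)$ est la jacobienne intermédiaire appropriée, est \emph{surjective de noyau nul} : c'est précisément le théorème de Murre \cite{MR805336} rappelé dans l'introduction (ou, plus élémentairement ici si $\tilde T$ a été choisie fibrée en coniques sur une base de petite dimension, via une décomposition de la diagonale adaptée ; mais invoquer Murre suffit puisque $\CH_0(\tilde T)$ n'a aucune raison d'être petit — \textbf{il faut donc plutôt argumenter autrement}). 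Le point correct est le suivant : on dispose du diagramme commutatif reliant $\Phi$ sur $X$ et $\Phi_{\tilde T}$ sur $\tilde T$ via les correspondances $\tilde i_*$, $\tilde i^*$ et $\tilde Z'_*$, compatibles aux applications d'Abel--Jacobi par fonctorialité (\cite[Lemme 12.17 et \S 12.1]{MR1988456}). Si $\alpha\in\ker\Phi$, alors $mm'\alpha=\tilde i_*(\beta)$ avec $\beta=\tilde Z'_*(\alpha)$, et $\Phi_{\tilde T}(\beta)$ est l'image de $\Phi(\alpha)=0$ sous $\tilde Z'_*\colon J^5(X)\to J^3(\tilde T)$, donc nul ; comme $\beta$ est une $1$-classe homologiquement triviale sur une \emph{surface}... non, $\tilde T$ est de dimension $3$. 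Je reviens donc au cœur de l'argument : il faut plutôt prendre $\tilde T$ de dimension $2$, ce qui est licite car on peut remplacer $T$ par une \emph{surface} le supportant (le cycle $Z'$ est supporté sur $X\times T'$ avec $\dim T'\le 2$ quitte à reprendre la décomposition avec un $W_i'$ de dimension $n-i$ et l'hypothèse de coniveau algébrique $2$ de la proposition \ref{coniveau} qui fournit $Z'$ supporté sur $X\times (\text{surface})$). Alors $\beta\in\CH_1(\tilde T)_{\hom}=\Pic^0(\tilde T)$ avec $\tilde T$ surface lisse projective, et $\Phi_{\tilde T}=\mathrm{id}$ sur $\Pic^0$ donc $\Phi_{\tilde T}$ est un isomorphisme sur $\CH_1(\tilde T)_{\alg}=\Pic^0(\tilde T)$ ; de $\Phi_{\tilde T}(\beta)=0$ on tire $\beta=0$ dans $\CH_1(\tilde T)_{\alg}$, d'où $mm'\alpha=\tilde i_*(\beta)=0$. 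Ainsi $\ker\Phi$ est annulé par l'entier non nul $mm'$ ; étant par ailleurs un quotient de $\CH^3(X)_{\alg}$ (divisible, lemme \ref{lemma:divisible}) et de type fini (image de $\Pic^0(\tilde T)$ via $\tilde i_*$ à une isogénie près, et noyau de $\Phi$ dans une variété abélienne), il est fini.

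\textbf{Le principal obstacle} sera de justifier proprement la compatibilité des applications d'Abel--Jacobi aux correspondances $\tilde i_*$, $\tilde i^*$, $\tilde Z'_*$ et d'en déduire que $\ker\Phi$ est non seulement de torsion (annulé par $mm'$) mais bien \emph{fini} — cela demande de contrôler la $2$-torsion/le rang via la structure de $\CH_1$ d'une surface et le fait que $\ker\Phi$ se plonge dans une variété abélienne après multiplication. Un point technique annexe, mais réel, est de s'assurer que la décomposition de la diagonale peut être choisie avec $T$ (ou du moins une variété supportant $Z'$) de dimension $2$ : cela résulte de combiner le théorème \ref{thm:DD} avec la proposition \ref{coniveau}(2), qui fournit le support de codimension $2$ voulu pour la partie $Z'$.
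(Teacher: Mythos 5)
Votre strat\'egie g\'en\'erale (utiliser la d\'ecomposition de la diagonale du corollaire \ref{cor:DDcubique5} pour ramener le probl\`eme \`a la vari\'et\'e $\tilde T$) est bien celle du texte, mais deux points de votre traitement du noyau ne tiennent pas. D'abord, l'affirmation selon laquelle on peut choisir $T$ de dimension $2$ est fausse~: le th\'eor\`eme \ref{thm:DD} appliqu\'e avec $c=2$ ne fournit qu'un $T$ de codimension $\geq 2$ dans $X$, donc de dimension $3$ en g\'en\'eral, et passer \`a $c=3$ exigerait l'injectivit\'e de $\cl\colon\CH_{2}(X)_{\QQ}\to H^{6}(X,\QQ)$, ce qui est pr\'ecis\'ement ce que l'on cherche \`a comprendre et qui ne peut \^etre suppos\'e~; la proposition \ref{coniveau}(2) ne dit rien de tel non plus. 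Mais ce d\'etour est inutile~: pour $\alpha\in\CH^{3}(X)_{\alg}=\CH_{2}(X)_{\alg}$, un calcul de dimensions montre que $\tilde Z'_{*}(\alpha)$ vit dans $\CH^{1}(\tilde T)$ --- c'est un \emph{diviseur} sur le solide $\tilde T$ et non un $1$-cycle --- de sorte que $\CH^{1}(\tilde T)_{\alg}\cong\Pic^{0}(\tilde T)$ et l'isomorphisme d'Abel--Jacobi pour les diviseurs s'appliquent directement avec $\dim\tilde T=3$. C'est exactement ce que fait le texte dans le diagramme (\ref{diag:FiniteKer}).

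Ensuite, m\^eme une fois ce point corrig\'e, votre argument ne donne que l'annulation de $\ker\Phi$ par $mm'$, et vous reconnaissez vous-m\^eme que cela n'entra\^ine pas la finitude~: un groupe annul\'e par un entier peut \^etre infini, et $\ker\Phi$ est un \emph{sous-groupe} (non un quotient) du groupe divisible $\CH^{3}(X)_{\alg}$, donc n'h\'erite d'aucune divisibilit\'e. L'astuce du texte consiste \`a inverser l'ordre des op\'erations~: pour $\alpha\in\ker\Phi$, on \'ecrit d'abord $\alpha=mm'\beta$ par divisibilit\'e (lemme \ref{lemma:divisible})~; alors $\Phi(\beta)\in J^{5}(X)[mm']$, donc $\tilde Z'_{*}(\beta)\in\Pic^{0}(\tilde T)[mm']$ qui est fini, et $\alpha=\tilde i_{*}\tilde Z'_{*}(\beta)$ appartient \`a l'ensemble fini $\tilde i_{*}\bigl(\Pic^{0}(\tilde T)[mm']\bigr)$. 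Enfin, pour la surjectivit\'e, votre recours \`a la courbe de plans n'est directement licite que pour $X$ g\'en\'erale, o\`u la proposition \ref{coniveau}(1) s'applique (c'est l'argument de la proposition \ref{prop:aj})~; pour $X$ arbitraire, le texte fait agir la d\'ecomposition (\ref{eqn:DD}) sur $\CH^{3}(X)_{\alg}$ et $J^{5}(X)$~: la compos\'ee $\tilde Z'^{*}\circ\tilde i^{*}$ est la multiplication par $mm'$, surjective par divisibilit\'e, et elle se factorise par l'application d'Albanese surjective $\CH_{0}(\tilde T)_{\alg}\twoheadrightarrow\Alb(\tilde T)$, d'o\`u la surjectivit\'e de $\Phi$.
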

\begin{proof}
On utilisera la d\'ecomposition raffin\'ee de la diagonale (\ref{eqn:DD}) du corollaire \ref{cor:DDcubique5}. Comme dans la d\'emonstration de la proposition \ref{prop:DeTorsion}, on suppose que $T$ est de dimension pure 3 et on choisit une d\'esingularisation  $\tilde T\to T$ \'equip\'ee d'un cycle $\tilde Z'\in \CH^{3}(X\times \tilde T)$ dont l'image dans $\CH_{5}(X\times T)$ est un multiple $m'Z'$ de $Z'$. On note $\tilde i: \tilde T\to X$ le compos\'e de la d\'esingularisation et l'inclusion. Notons que $\tilde T$ n'est pas forc\'ement connexe et on \'ecrit $\Pic^{0}(\tilde T)=\prod \Pic^{0}(\tilde T_{i})$ et $\Alb(\tilde T)=\prod \Alb(\tilde T_{i})$, o\`u $\tilde T=\coprod_{i} \tilde T_{i}$ est la d\'ecomposition en composantes connexes.

Montrons la surjectivit\'e de $\Phi$. Par la compatibilit\'e entre les applications d'Abel--Jacobi et les actions des correspondances (\emph{cf.} \cite[\S 12.2]{MR1988456}), on a le diagramme commutatif suivant\,:
\begin{equation}\label{diag:Surj}
\xymatrix{
 \CH^{3}(X)_{\alg} \ar[r]^-{\Phi} \ar[d]^{{\tilde i}^{*}} \ar@{->>}@/_3pc/[dd]_{\times mm'}& J^{5}(X) \ar[d]^{{\tilde i}^{*}} \ar@{->>}@/^3pc/[dd]^{\times mm'}\\
 \CH^{3}(\tilde T)_{\alg} \ar@{->>}[r]^{\alb} \ar[d]^{{\tilde {Z'}}^{*}}& \Alb(\tilde T)\ar@{->>}[d]^{[\tilde {Z'}]^{*}}\\
 \CH^{3}(X)_{\alg} \ar[r]^-{\Phi}& J^{5}(X)
}
\end{equation}
o\`u les fl\`eches \`a droite sont induites par les morphismes indiqu\'es sur les structures de Hodge. La d\'ecomposition (\ref{eqn:DD}) implique que les deux compos\'ees des fl\`eches verticales dans (\ref{diag:Surj}) sont toutes la multiplication par $mm'$, car les deux premiers termes de (\ref{eqn:DD}) agissent par z\'ero par raison de dimension. Elles sont donc surjectives par la divisibilit\'e. Du coup la fl\`eche verticale du carr\'e inf\'erieur est surjective. Or l'application d'Albanese $\alb$ est aussi surjective, on peut conclure que $\Phi$ est surjective.

Montrons ensuite la finitude du noyau de $\Phi$. \`A nouveau, nous avons un diagramme commutatif
\begin{equation}\label{diag:FiniteKer}
\xymatrix{
 \CH^{3}(X)_{\alg} \ar[r]^-{\Phi} \ar[d]^{{\tilde {Z'}}_{*}} \ar@{->>}@/_3pc/[dd]_{\times mm'}& J^{5}(X) \ar[d]^{[\tilde {Z'}]_{*}} \ar@{->>}@/^3pc/[dd]^{\times mm'}\\
 \CH^{1}(\tilde T)_{\alg} \ar[r]^{\simeq} \ar[d]^{{\tilde i}_{*}}& \Pic^{0}(\tilde T)\ar[d]^{{\tilde i}_{*}}\\
 \CH^{3}(X)_{\alg} \ar[r]^-{\Phi}& J^{5}(X)
}
\end{equation}
De mani\`ere similaire, (\ref{eqn:DD}) implique que les compos\'ees des fl\`eches verticales dans (\ref{diag:FiniteKer}) sont toutes deux la multiplication par $mm'$. Maintenant pour tout $\alpha\in \ker{\Phi}$, par la divisibilit\'e de $\CH^{3}(X)_{\alg}$, rappel\'ee dans le lemme \ref{lemma:divisible}, il existe un $\beta\in \CH^{3}(X)_{\alg}$, tel que $\alpha=mm'\beta$. Donc $\Phi(\beta)\in J^{5}(X)[mm']$. Comme la fl\`eche au milieu de (\ref{diag:FiniteKer}) est un isomorphisme, on trouve que ${\tilde {Z'}}_{*}(\beta)\in \CH^{1}(\tilde T)_{\alg}[mm']$. D'o\`u, $$\alpha=mm'\beta={\tilde i}_{*}\circ{\tilde {Z'}}_{*}(\beta)\in {\tilde i}_{*}\left(\CH^{1}(\tilde T)_{\alg}[mm']\right),$$ qui est un ensemble fini.
\end{proof}

On peut en d\'eduire le th\'eor\`eme \ref{main} (2) pour une hypersurface cubique \emph{g\'en\'erale}\,:
\begin{prop}\label{prop:aj}
Si $X$ est g\'en\'erale, alors l'application d'Abel--Jacobi $\Phi: \CH^3_{alg} \to J^5(X)$ est un isomorphisme de groupes.
\end{prop}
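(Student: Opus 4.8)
L'idée est de combiner la proposition \ref{prop:Isogenie}, qui nous dit déjà que $\Phi$ est surjective à noyau fini, avec le fait que pour $X$ générale, $\CH^3(X)_{\alg}$ est divisible (lemme \ref{lemma:divisible}) et avec l'identification du groupe de Griffiths établie au \S\ref{sect:Griff}. D'abord, je remarquerais que $\Griff^3(X)=0$ (partie (1) du théorème \ref{main}, déjà démontrée) donne $\CH^3(X)_{\alg}=\CH^3(X)_{\ho}$, de sorte que la question de l'injectivité de $\Phi$ sur $\CH^3(X)_{\alg}$ est bien la question de savoir si un cycle homologiquement trivial d'application d'Abel--Jacobi nulle est rationnellement trivial. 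Ensuite, pour $X$ générale on dispose par Collino \cite{collino1986AJcubic5fold} de l'isomorphisme $q_*p^*\colon H^3(F,\ZZ)\xrightarrow{\cong}H^5(X,\ZZ)$ où $F$ est la surface de Fano des plans, et surtout de l'isomorphisme $\Alb(F)\xrightarrow{\cong}J^5(X)$.

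Le cœur de l'argument est de montrer que le noyau fini $N:=\ker\Phi$ est en fait trivial. Pour cela j'utiliserais la divisibilité\,: si $\alpha\in N$, alors pour tout entier $k$ il existe $\beta$ avec $\alpha=k\beta$, mais comme $N$ est fini d'ordre disons $e$, en prenant $k=e$ on obtient $\alpha=e\beta$ avec $\Phi(\beta)\in J^5(X)[e]$\,; il faut alors relever ceci via la construction géométrique. Plus précisément, je ferais intervenir directement la paramétrisation par la courbe $C\hookrightarrow F$ de la proposition \ref{coniveau}\,: le morphisme composé $\CH^1(C)_{\alg}\to\CH^3(X)_{\alg}$ (induit par la famille de plans au-dessus de $C$) est compatible via Abel--Jacobi avec la surjection $\Pic^0(C)=\Alb(C)\twoheadrightarrow\Alb(F)\xrightarrow{\cong}J^5(X)$. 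Comme $\CH^1(C)_{\alg}=\Pic^0(C)$ (cas des courbes, où Abel--Jacobi est un isomorphisme classique), on obtient que $\CH^3(X)_{\alg}$ est \emph{engendré}, modulo $\ker\Phi$, par l'image de $\Pic^0(C)$\,; en poussant cet argument avec la décomposition de la diagonale du corollaire \ref{cor:DDcubique5} et le diagramme \eqref{diag:FiniteKer}, on identifie tout élément de $\ker\Phi$ à un élément de ${\tilde i}_*\big(\CH^1(\tilde T)_{\alg}[mm']\big)$ provenant, après multiplication par $mm'$, d'un élément de $\Pic^0$ de torsion dont l'image par Abel--Jacobi est nulle, donc nul.

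Le point délicat sera de contrôler précisément le passage du « noyau annulé par $mm'$ » au « noyau nul », c'est-à-dire de transformer l'énoncé d'isogénie de la proposition \ref{prop:Isogenie} en un énoncé d'isomorphisme. Je m'attends à ce que l'obstacle principal soit de vérifier que le morphisme $\CH^1(\tilde T)_{\alg}\to\Pic^0(\tilde T)$ étant un isomorphisme (ce qui est standard), la torsion ${\tilde {Z'}}_*(\beta)\in\CH^1(\tilde T)_{\alg}[mm']$ a une image nulle dans $\CH^3(X)_{\alg}$ par ${\tilde i}_*$ — ceci demande d'exploiter que, pour $X$ générale, on peut choisir la désingularisation $\tilde T$ de façon à ce que $\tilde i_*$ soit injective sur la torsion, typiquement en prenant $T$ lié à la famille de plans paramétrée par la surface lisse $F$ elle-même (ou la courbe $C$), où la situation est contrôlée par Collino. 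Une fois ce point acquis, la conclusion $\ker\Phi=0$ est immédiate, et combinée à la proposition \ref{prop:Isogenie} elle donne que $\Phi$ est un isomorphisme de groupes pour $X$ générale.
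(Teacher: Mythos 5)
You assemble the right ingredients (the curve $C\subset F$ from Proposition \ref{coniveau}, the isomorphism $\CH^1(C)_{\alg}\cong J(C)$, the surjectivity of $J(C)\to J^5(X)$, Proposition \ref{prop:Isogenie}, and the divisibility of $\CH^3(X)_{\alg}$), but the step that is supposed to kill the finite kernel does not work as written. You reduce to showing that $\tilde i_*\bigl(\tilde{Z'}_*(\beta)\bigr)=0$ for a torsion class $\tilde{Z'}_*(\beta)\in \Pic^0(\tilde T)[mm']$, and you propose to obtain this by choosing $\tilde T$ so that $\tilde i_*$ is \emph{injective} on torsion. That is backwards: injectivity of $\tilde i_*$ on torsion would show that a nonzero torsion class has nonzero image $\alpha$, i.e.\ the opposite of what you need; and in any case $T$ is produced by the decomposition of the diagonal (Corollaire \ref{cor:DDcubique5}) and cannot be chosen to be related to $F$ or $C$. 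Nothing in diagram (\ref{diag:FiniteKer}) forces the particular class $\tilde{Z'}_*(\beta)$ to vanish, so the passage from ``finite kernel'' to ``trivial kernel'' is genuinely missing from your sketch.

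The paper closes this gap by a splitting argument that avoids $\tilde T$ entirely. Let $A:=\ker\bigl(J(C)\to J^5(X)\bigr)$, an abelian subvariety and hence a divisible group. The composite $\CH^1(C)_{\alg}\cong J(C)\to \CH^3(X)_{\alg}\xrightarrow{\Phi} J^5(X)$ sends $A$ into $\ker\Phi$, which is finite by Proposition \ref{prop:Isogenie}; since every homomorphism from a divisible group to a finite group is trivial, $A$ dies in $\CH^3(X)_{\alg}$, so the cylinder map factors through $J(C)/A\cong J^5(X)$ and yields a section of $\Phi$. Hence $\CH^3(X)_{\alg}\cong J^5(X)\oplus\ker\Phi$, and since $\CH^3(X)_{\alg}$ is divisible (Lemme \ref{lemma:divisible}) it admits no nontrivial finite direct summand, whence $\ker\Phi=0$. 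The only idea beyond what you wrote is to apply divisibility twice --- first to $A$, to produce the splitting, then to $\CH^3(X)_{\alg}$, to kill the finite summand --- instead of trying to trace an element of the kernel through $\Pic^0(\tilde T)$.
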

\begin{proof}
Par la proposition \ref{coniveau} (1), il existe une famille de plans contenu dans $X$ param\'etr\'ee par une courbe projective lisse $C$, telle que l'application d'Abel--Jacobi $H^1(C, \ZZ) \to H^5(X, \ZZ)$ est surjective.
On a donc un diagramme commutatif
\begin{equation}
\xymatrix{
 \CH^1(C)_{\alg} \ar[r]^-{\cong} \ar[d]& J(C) \ar@{->>}[d]^{\text{ fib. conn. }}\\
 \CH^3(X)_{\alg} \ar@{->>}[r]^-{\Phi} & J^5(X)
 }
\end{equation}
dont la fl\`eche verticale de droite est surjective \`a fibres connexes par la surjectivit\'e de $H^1(C, \ZZ) \to H^5(X, \ZZ)$. On note $A$ le noyau de cette fl\`eche, qui est une sous-vari\'et\'e ab\'elienne de $J(C)$.

Par la proposition \ref{prop:Isogenie}, $\Phi$ est surjective avec un noyau fini. Comme tout morphisme d'un groupe divisible vers un groupe fini est trivial, l'image de $A$ dans $\ker(\Phi)$ est triviale. Il en r\'esulte que l'image du groupe $\CH^1(C)_{\alg}$ dans $\CH^3(X)_{\alg}$ est isomorphe \`a $J^5(X)$.
Donc on a un scindage de $\Phi$ et $\CH^3(X)_{\alg} \cong J^5(X)\oplus \ker(\Phi)$.
Or le groupe $\CH^3(X)_{\alg}$ est divisible (Lemme \ref{lemma:divisible}), et donc $\ker(\Phi)=0$.
\end{proof}

Pour d\'emontrer le th\'eor\`eme \ref{main} (2) pour une hypersurface cubique lisse \emph{arbitraire}, on a besoin du lemme suivant pour faire un argument de d\'eformation. Voir \cite[\S 20.3]{MR1644323} pour l'application de sp\'ecialisation. 
\begin{lem}[{\emph{cf.}~\cite[Corollaire 1.5]{mboro2017cubic}}]\label{lem:deformation}
Soit $\mathcal{X} \to T$ une famille d'hypersurfaces cubiques lisses de dimension 5 sur une courbe et $\mathcal{X}_{\bar{\eta}}$ la fibre g\'en\'erique g\'eom\'etrique. Alors pour tout point ferm\'e $t \in T$, l'application de sp\'ecialisation $\CH^3(\mathcal{X}_{\bar{\eta}}) \to \CH^3(\mathcal{X}_t)$ est surjective.
\end{lem}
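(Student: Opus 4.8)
The plan is to reduce to a discrete valuation ring, realise the geometric generic fibre as a filtered colimit of finite base changes, and then combine the structural results already obtained --- $\Griff^{3}=0$ (Th\'eor\`eme \ref{main}(1)) and the fact that $\Phi$ is an isog\'enie (Proposition \ref{prop:Isogenie}), both valid for every smooth cubic fivefold over an arbitrary algebraically closed field of characteristic $0$ --- with the divisibility of $\CH^{3}_{\alg}$ (Lemme \ref{lemma:divisible}). Concretely, replace $T$ by $\operatorname{Spec}$ of the DVR $R:=\mathcal{O}_{T,t}$ (fraction field $K$, residue field $\CC$) and let $f\colon\mathcal{X}_{R}\to\operatorname{Spec} R$ be the smooth proper family. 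For each finite subextension $K\subset K'\subset\bar{K}$, the localisation at a point above $t$ of the normalisation of $R$ in $K'$ is a DVR $R'$ with residue field $\CC$ (as $\CC$ is algebraically closed), hence a specialisation morphism $\sigma_{K'}\colon\CH^{3}(\mathcal{X}_{K'})\to\CH^{3}(\mathcal{X}_{t})$ following \cite[\S 20.3]{MR1644323}; since $\CH^{3}(\mathcal{X}_{\bar\eta})=\varinjlim_{K'}\CH^{3}(\mathcal{X}_{K'})$, the map to be studied is $\sigma=\varinjlim_{K'}\sigma_{K'}$. It is compatible with the Abel--Jacobi map and carries $\CH^{3}(\mathcal{X}_{\bar\eta})_{\alg}$ into $\CH^{3}(\mathcal{X}_{t})_{\alg}$.

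First, the class of a plane lies in the image of $\sigma$. For every smooth cubic fivefold the Fano scheme of planes is non-empty (the incidence variety of planes in cubic fivefolds is irreducible and surjects onto the space of cubic forms), so for $K'$ large there is a plane $\Pi'\subset\mathcal{X}_{K'}$; its closure in $\mathcal{X}_{R'}$ is flat over $R'$ with special fibre a plane $\Pi_{t}\subset\mathcal{X}_{t}$, whence $\sigma([\Pi'])=[\Pi_{t}]$. By Th\'eor\`eme \ref{main}(1) and the Lefschetz hyperplane theorem one has $\CH^{3}(\mathcal{X}_{t})=\ZZ\cdot[\Pi_{t}]\oplus\CH^{3}(\mathcal{X}_{t})_{\alg}$, every plane having class $1$ in $H^{6}(\mathcal{X}_{t},\ZZ)\cong\ZZ$. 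It remains to show that $\sigma$ is surjective onto $\CH^{3}(\mathcal{X}_{t})_{\alg}$.

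For this, use the commutative square
\[
\xymatrix{
\CH^{3}(\mathcal{X}_{\bar\eta})_{\alg}\ar[r]^-{\Phi_{\bar\eta}}\ar[d]_{\sigma} & J^{5}(\mathcal{X}_{\bar\eta})\ar[d]^{\sigma_{J}}\\
\CH^{3}(\mathcal{X}_{t})_{\alg}\ar[r]^-{\Phi_{t}} & J^{5}(\mathcal{X}_{t})
}
\]
where $\sigma_{J}$ is the specialisation on intermediate Jacobians. Since $R^{5}f_{*}\ZZ$ underlies a variation de structures de Hodge whose Hodge numbers are concentrated in bidegrees $(3,2)$ and $(2,3)$, the $J^{5}(\mathcal{X}_{s})$ assemble into an abelian scheme over a neighbourhood of $t$; its specialisation map over the henselian valuation ring $\bigcup_{K'}R'$ (residue field $\CC$) is surjective, because sections of a smooth scheme lift from the closed point of a henselian local ring with algebraically closed residue field. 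Thus $\sigma_{J}$ is surjective. By Proposition \ref{prop:Isogenie} applied to $\mathcal{X}_{\bar\eta}$, $\Phi_{\bar\eta}$ is surjective, so $\Phi_{t}\circ\sigma=\sigma_{J}\circ\Phi_{\bar\eta}$ is surjective and $\operatorname{Im}(\sigma)+\ker(\Phi_{t})=\CH^{3}(\mathcal{X}_{t})_{\alg}$. As $\ker(\Phi_{t})$ is finite (Proposition \ref{prop:Isogenie}) while $\CH^{3}(\mathcal{X}_{t})_{\alg}$ is divisible (Lemme \ref{lemma:divisible}), and a divisible group has no proper subgroup of finite index, $\operatorname{Im}(\sigma)=\CH^{3}(\mathcal{X}_{t})_{\alg}$; together with the preceding paragraph, $\sigma$ is surjective.

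The step I expect to be delicate is the requirement that this whole package --- $\Griff^{3}=0$, $\Phi$ an isog\'enie, and the existence of $J^{5}$ together with the functoriality of $\Phi$ under specialisation --- be available over the (uncountable) algebraically closed field $\bar{K}$. The proofs of Th\'eor\`eme \ref{main}(1) and Proposition \ref{prop:Isogenie} rely only on Bloch--Ogus, the d\'ecomposition de la diagonale (Corollaire \ref{cor:DDcubique5}) and cohomological computations, hence remain valid over $\bar{K}$; and for $J^{5}$ one may either fix an embedding $\bar{K}\hookrightarrow\CC$ (along which Chow groups inject and algebraic triviality descends) or work throughout with Murre's algebraic representative of $\CH^{3}_{\alg}$ in place of $J^{5}$, at the cost in each case of the routine checks that dimensions and polarisations match. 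The rest is a formal diagram chase resting on the divisibility of $\CH^{3}_{\alg}$.
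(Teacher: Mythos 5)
Your route is genuinely different from the paper's, and it has a real gap at its central step. The paper's proof is purely geometric and never mentions the Abel--Jacobi map: it first invokes the fact that $\CH_{2}(\mathcal{X}_{t})$ is generated by lines, i.e.\ that the cylinder map $\CH_{1}(F(\mathcal{X}_{t}))\to\CH_{2}(\mathcal{X}_{t})$ from the Fano scheme of lines is surjective, which reduces the statement to the surjectivity of the specialisation map $\CH_{1}(F(\mathcal{X}_{\bar\eta}))\to\CH_{1}(F(\mathcal{X}_{t}))$; since $F(\mathcal{X}_{t})$ is a Fano variety, hence rationnellement connexe, any curve $C_{0}\to F(\mathcal{X}_{t})$ can be completed into a \emph{peigne} $C=C_{0}\cup C_{1}\cup\cdots\cup C_{k}$ with rational teeth having ample restricted tangent bundle, so that the deformations of $C$ are unobstructed; then $C$ and each tooth $C_{i}$ lift to the geometric generic fibre and $[C_{0}]=[C]-\sum_{i}[C_{i}]$ lies in the image of the specialisation map. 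This argument needs neither Th\'eor\`eme \ref{main}(1) nor Proposition \ref{prop:Isogenie}.

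The gap in your argument is the commutative square involving $\sigma_{J}$. For the square to make sense you need an Abel--Jacobi map $\Phi_{\bar\eta}\colon\CH^{3}(\mathcal{X}_{\bar\eta})_{\alg}\to\mathcal{J}_{\bar\eta}(\bar{K})$ whose target is the geometric generic fibre of the abelian scheme $\mathcal{J}^{5}(\mathcal{X}/T)\to T$, so that the reduction map $\mathcal{J}_{\bar\eta}(\bar{K})\to\mathcal{J}_{t}(\CC)=J^{5}(\mathcal{X}_{t})$ can serve as $\sigma_{J}$. But the transcendental Abel--Jacobi map is only defined after choosing an isomorphism $\bar{K}\cong\CC$, and its target is then the complex torus $J^{5}(\mathcal{X}_{\bar\eta}\otimes_{\bar{K}}\CC)$ built from the Betti cohomology of that complex variety; there is no canonical comparison between this torus and $\mathcal{J}_{\bar\eta}\otimes_{\bar{K}}\CC$, and \emph{a fortiori} no natural map from it to $J^{5}(\mathcal{X}_{t})$, because the chosen isomorphism $\bar{K}\cong\CC$ has nothing to do with the geometry of $T$ near $t$. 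What you actually need is a distinguished model of $J^{5}$ over $\bar{K}$ --- equivalently, an algebraic representative of $\CH^{3}_{\alg}$ defined over $\bar{K}$, compatible with specialisation and with the transcendental $\Phi$ after any base change to $\CC$. For codimension $2$ this is Murre's theorem together with later work of Achter, Casalaina-Martin and Vial; for codimension $3$ on a cubic fivefold it is not available off the shelf and is of essentially the same depth as the theorem being proved --- indeed the paper's proof of Th\'eor\`eme \ref{main}(2) resorts to normal functions, Chow varieties and a countability argument precisely in order to avoid ever having to define ``$\Phi_{\bar\eta}$''. So the step you dismiss as routine is in fact the whole difficulty. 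The remaining pieces of your argument (spreading out a plane over a finite extension, the splitting $\CH^{3}(\mathcal{X}_{t})=\ZZ\cdot[\Pi_{t}]\oplus\CH^{3}(\mathcal{X}_{t})_{\alg}$ coming from $\Griff^{3}=0$, and the fact that a divisible group has no proper subgroup of finite index) are correct.
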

\begin{proof}
La d\'emonstration est quasiment la m\^eme que celle de  \cite[Corollaire 1.5]{mboro2017cubic}, sauf qu'on montre la surjectivit\'e pour le groupe des cycles modulo l'\'equivalence rationnelle au lieu de l'\'equivalence alg\'ebrique, en utilisant un r\'esultat plus fort sur la d\'eformation des courbes d'une vari\'et\'e rationnellement connexe.

Soit $F(\mathcal{X})\to T$ la famille des sch\'emas de Fano des droites de la famille $\mathcal{X}\to T$. 
Alors l'application $\CH_1(F(\mathcal{X}_{t})) \to \CH_2(\mathcal{X}_{t})$ induite par la famille universelle des droites est surjective pour chaque point ferm\'e $t$ de $T$ (\cite[Proposition 1.4]{mboro2017cubic}).  
Donc il suffit de montrer que l'application de sp\'ecialisation $\CH_1(F(\mathcal{X}_{\bar{\eta}})) \to \CH_1(F(\mathcal{X}_t))$ est surjective pour tout $t\in T$. Notons $\mathcal{Y}_{t}:=F(\mathcal{X}_t)$ et $\mathcal{Y}_{\bar\eta}:=F(\mathcal{X}_{\bar\eta})$ respectivement. Ce sont des vari\'et\'es de Fano et donc rationnellement connexes par \cite{MR1189503} et \cite{MR1191735}. 

Soit $f_0: C_0 \to \mathcal{Y}_t$ un morphisme, o\`u $C_{0}$ est une courbe. Gr\^{a}ce \`a la connexit\'e rationnelle de $\mathcal{Y}_{t}$, on peut construire un \emph{peigne} $f: C \to \mathcal{Y}_t$ (voir par exemple \cite[Paragraph 25 et Theorem 27]{MR2011743}) tel que 
\begin{enumerate}
\item La courbe $C=C_0 \cup \bigcup_{i=1}^k C_i$ est une courbe nodale, et chaque point nodal est le seul point d'intersection de $C_{0}$ avec une des courbes $C_1, \ldots, C_k$.
\item Les courbes $C_1, \ldots, C_k$ sont des courbes rationnelles, et le faisceau localement libre $f_i^*T_{\mathcal{Y}_t}$ est ample pour tout $1\leq i\leq k$.
\item La d\'eformation du peigne $f: C \to \mathcal{Y}_t$ est non obstru\'ee.
\end{enumerate}
Par cons\'equent, il existe des courbes $$\mathcal{F}:\mathcal{C}\to \mathcal{Y}_{\bar{\eta}}, ~~\mathcal{F}_i: \mathcal{C}_i \to \mathcal{Y}_{\bar{\eta}}, ~~i=1, \ldots, k,$$ dont les sp\'ecialisations sont des courbes $$f: C \to \mathcal{Y}_t,~~ f_i: C_i \to \mathcal{Y}_t,~~ i=1, \ldots, k.$$
Alors la classe $f_{0,*}(C_0)$ dans $CH_1(\mathcal{Y}_{t})$, qui est $C-\sum_{i=1}^{k}C_{i}$, est dans l'image de l'application de sp\'ecialisation $\CH_1(\mathcal{Y}_{\bar\eta}) \to \CH_1(\mathcal{Y}_t)$.
\end{proof}

\begin{proof}[D\'emonstration du th\'eor\`eme \ref{main}(2)]
La surjectivit\'e de l'application d'Abel--Jacobi $\Phi$ est prouv\'ee dans la proposition \ref{prop:Isogenie}. Il reste \`a montrer son injectivit\'e. Soit $\mathcal{X}\to T$ une famille d'hypersurfaces cubiques lisses de dimension 5 telle que la fibre sur un point $0\in T$ est  isomorphe \`a $X$ et pour tout $t\in T\backslash\{0\}$, l'application d'Abel--Jacobi $\Phi_{t}: \CH^{3}(\mathcal{X}_{t})_{\alg}\to J^{5}(\mathcal{X}_{t})$ est un isomorphisme. L'existence d'une telle famille est garantie par la proposition \ref{prop:aj}.

Pour un cycle  $\alpha\in \ker\left(\Phi\colon\CH^{3}(X)_{\alg}\to J^{5}(X)\right)$ quelconque, par le lemme \ref{lem:deformation}, quitte \`a faire un changement de base de la famille $\mathcal{X}/T$, on peut supposer qu'il existe un cycle $Z\in \CH^{3}(\mathcal{X})$ tel que $Z_0$ est rationnellement \'equivalent \`a $\alpha$ dans $\mathcal{X}_{0}=X$. 
On consid\`ere la fonction normale (\emph{cf.} \cite[D\'efinition 19.5 et Th\'eor\`eme 19.9]{MR1988456}) induite par $Z$, not\'ee $\sigma$, qui est une section de la fibration en jacobiennes interm\'ediaires $\pi: \mathcal{J}^{5}(\mathcal{X}/T)=:\mathcal{J}\to T$. Le fait que $\alpha\in \ker(\Phi)$ dit que $\sigma(0)=0$ dans $\mathcal{J}_{0}$. 

Comme la surjectivit\'e de l'application d'Abel--Jacobi vaut pour toute hypersurface cubique lisse, le morphisme suivant de $T$-sch\'emas est surjectif\,:
\begin{eqnarray*}
 \coprod_{d\in \NN}\Chow^{2,d}(\mathcal{X}/T)\times_{T}\Chow^{2,d}(\mathcal{X}/T)&\to& \mathcal{J}\\
 (z_{1}, z_{2})& \mapsto & \Phi(z_{1}-z_{2}),
\end{eqnarray*}
o\`u $\Chow^{2,d}(\mathcal{X}/T)$ est la vari\'et\'e de Chow du $T$-sch\'ema $\mathcal{X}$ des cycles (effectifs) de dimension 2 et de degr\'e $d$ par rapport \`a certaine polarisation. Par la d\'enombrabilit\'e du nombre des composantes de la vari\'et\'e de Chow et la non-d\'enombrabilit\'e du corps de base $\CC$, il existe un $d$, tel que la restriction \`a la composante $$\mathcal{W}:=\Chow^{2,d}(\mathcal{X}/T)\times_{T}\Chow^{2,d}(\mathcal{X}/T)\to \mathcal{J}$$ est d\'ej\`a un $T$-morphisme surjectif. Quitte \`a faire encore un changement de base qu'on notera toujours par $\mathcal{W}/T$, on peut supposer que le morphisme structural $p: \mathcal{W}\to T$ a une section $r: T\to \mathcal{W}$. On dispose donc d'une famille de 2-cycles $Z'$ dans les fibres de $\mathcal{X}/T$, param\'etr\'ee par $\mathcal{W}/T$, telle que l'application d'Abel--Jacobi 
\begin{eqnarray*}
 AJ_t\colon \mathcal{W}_t &\to& \mathcal{J}_t\\
 w&\mapsto& \Phi(Z'_{w}-Z'_{r(p(w))})
\end{eqnarray*}
 est surjective pour tout $t \in T$.
De plus, on peut supposer que $\mathcal{W}_t$ est irr\'educible pour tout $t \in T$ g\'en\'eral et que le morphisme $\mathcal{W} \to T$ est propre et plat.

On peut donc trouver une courbe $S$ dans $\mathcal{W}$, passant par $r(0)\in \mathcal{W}_{0}$, plate sur $T$, telle que $AJ_{p(s)}(s)=\sigma(s)$ dans $\mathcal{J}_{p(s)}$ pour tout $s\in S$. Par le changement de base $p\colon S\to T$, on obtient une nouvelle famille d'hypersurfaces cubiques lisses $\mathcal{X}_{S}/S$ et une famille de 2-cycles homologiquement triviaux $$\Gamma:=Z\times_{T}S-\left({Z'}\times_{W}S-Z'|_{r(T)}\times_{T} S\right)\in \CH^{3}(\mathcal{X}_{S}),$$ tels que la nouvelle fonction normale est la section nulle de la nouvelle fibration en jacobiennes interm\'ediaires $\mathcal{J}^{5}(\mathcal{X}_{S}/S)\to S$. Gr\^ace \`a la proposition \ref{prop:aj}, pour tout $s\notin p^{-1}(0)$, $\Gamma_{s}$ est rationnellement \'equivalent \`a z\'ero. Par la sp\'ecialisation \cite[\S 20.3]{MR1644323}, $\Gamma_{s}=0$ dans $\CH^{3}\left(\mathcal{X}_{p(s)}\right)$ pour tout $s\in S$. En particulier, 
$$\alpha=\Gamma_{r(0)}=0$$
dans $\CH^{3}(X)$. D'o\`u l'injectivit\'e de $\Phi$. La partie (2) du th\'eor\`eme \ref{main} est \'etablie.
\end{proof}

\section{Preuve du th\'eor\`eme \ref{main2}}\label{sect:SS}

\begin{lem}\label{lem:H1i}
Pour tout $i\geq 2$, le groupe $H^{1}(X, \HH^{i}(\ZZ))$ est nul.
\end{lem}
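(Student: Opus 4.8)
The plan is to follow the pattern of Propositions \ref{prop:DeTorsion} and \ref{prop:Isogenie}, combining the refined decomposition of the diagonal (\ref{eqn:DD}) with the unramified cohomology vanishings of \S\ref{subsect:VanishingHnr}. First I would dispose of the range $i>\dim X=5$: there $\HH^{i}(\ZZ)=0$ already as a Zariski sheaf, by the affine Lefschetz theorem \cite{MR0177422} (its stalks are filtered colimits of the $H^{i}(U,\ZZ)$ over affine open $U$ of dimension $5$). So it remains to treat $2\leq i\leq 5$.

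The first step is to show that $H^{1}(X,\HH^{i}(\ZZ))$ is torsion-free for $2\leq i\leq 5$. For this I would apply Lemme \ref{lemma:SES} with $j=0$, using that $H^{0}(X,\HH^{i}(\ZZ))=H^{i}_{\nr}(X,\ZZ)=0$ for $i>0$ since $X$ is rationnellement connexe (\cite[Proposition 3.3]{CTVoisin2012IHC}). The short exact sequence then gives $H^{1}(X,\HH^{i}(\ZZ))[n]\cong H^{0}(X,\HH^{i}(\ZZ/n))=H^{i}_{\nr}(X,\ZZ/n)$ for every $n\in\NN^{*}$, hence $H^{1}(X,\HH^{i}(\ZZ))_{\tor}\cong H^{i}_{\nr}(X,\QQ/\ZZ)$ after passing to the colimit over $n$. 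By \S\ref{subsect:VanishingHnr} the right-hand side vanishes for $2\leq i\leq 5$: the cases $i=2,3$ are recalled there, while $i=4,5$ are Lemmes \ref{nr4} and \ref{nr5}.

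The second step is to exhibit a fixed nonzero integer annihilating $H^{1}(X,\HH^{i}(\ZZ))$. Here I would let correspondences act on the bigraded groups $H^{\bullet}(-,\HH^{\bullet}(\ZZ))$ via the Bloch--Ogus formalism (\cite{MR0412191}, \emph{cf.}~\cite{CTVoisin2012IHC}), recalling that a cycle class in $\CH^{d}(V\times W)$ shifts the bidegree $(p,q)$ by $(d-\dim V,\,d-\dim V)$; in particular a Gysin pushforward along a morphism of smooth varieties of codimension $2$ shifts it by $(2,2)$. Applying (\ref{eqn:DD}) to a class $\alpha\in H^{1}(X,\HH^{i}(\ZZ))$, the term $x\times X$ acts by zero because it factors through the cohomology of a point, and $l\times h$ acts by zero because it factors through the cohomology of the line $l$, these sheaves $\HH^{q}$ vanishing for $q\geq 1$, resp.\ $q\geq 2$, while $i\geq 2$. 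Hence $m\alpha=Z'_{*}(\alpha)$. Now, exactly as in the proof of Proposition \ref{prop:DeTorsion}, take $T$ of pure dimension $3$, a désingularisation $\tilde T\to T$, and a cycle $\tilde Z'\in\CH^{3}(X\times\tilde T)$ whose image in $\CH_{5}(X\times X)$ is $m'Z'$, with $\tilde\iota\colon\tilde T\to X$ the composite (of codimension $2$). Then $m'Z'_{*}(\alpha)=\tilde\iota_{*}\big(\tilde Z'_{*}(\alpha)\big)$, and $\tilde Z'_{*}(\alpha)$ lands in $H^{1-2}(\tilde T,\HH^{i-2}(\ZZ))=H^{-1}(\tilde T,\HH^{i-2}(\ZZ))=0$. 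Therefore $mm'\alpha=m'Z'_{*}(\alpha)=0$.

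Combining the two steps, $H^{1}(X,\HH^{i}(\ZZ))$ is torsion-free and killed by the nonzero integer $mm'$, hence zero; for $i=4$ one may alternatively invoke $\Griff^{3}(X)=0$ directly through Proposition \ref{prop:Griffiths}, the Tate twist being immaterial for Betti coefficients. The one point requiring care is the bookkeeping in the second step: one must make sure that the residual correspondence $Z'$ of the diagonal decomposition genuinely acts on $H^{1}(X,\HH^{i}(\ZZ))$ through a Zariski cohomology group of $\tilde T$ sitting in the vanishing negative degree $-1$, which comes down to tracking the bidegree shifts in the coniveau formalism and its compatibility with correspondences, just as in \S\ref{sect:Griff} and \S\ref{sect:IsomAJ}.
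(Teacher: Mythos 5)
Your proof is correct and follows essentially the same route as the paper: torsion-freeness via the short exact sequence of Lemme \ref{lemma:SES} (with $j=0$) combined with the vanishing of $H^{i}_{\nr}(X,\QQ/\ZZ)$, then annihilation by a fixed integer via the action of the diagonal decomposition (\ref{eqn:DD}), with $Z'$ factoring through $H^{-1}(\tilde T,\HH^{i-2}_{\tilde T}(\ZZ))=0$. Your bidegree bookkeeping for the correspondence action matches the paper's, and the extra remarks (the case $i>5$, the alternative for $i=4$ via Proposition \ref{prop:Griffiths}) are correct but not needed.
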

\begin{proof}
Par le lemme \ref{lemma:SES}, on a une suite exacte courte
$$0\to H^{i}_{\nr}(X, \ZZ)\otimes \QQ/\ZZ\to H^{i}_{\nr}(X, \QQ/\ZZ)\to H^{1}(X, \HH^{i}(\ZZ))_{\tor}\to 0.$$
Donc $H^{1}(X, \HH^{i}(\ZZ))$ est sans torsion par l'annulation de $H^{i}_{\nr}(X, \QQ/\ZZ)$ (\S \ref{subsect:VanishingHnr}, lemmes \ref{nr4} et \ref{nr5}). Il reste \`a voir qu'il est de torsion. On fait agir les deux c\^ot\'es de la d\'ecomposition de la diagonale (\ref{eqn:DD}), modulo l'\'equivalence alg\'ebrique (\emph{cf.} \cite[Appendice A]{CTVoisin2012IHC}), sur $H^{1}(X, \HH^{i}(\ZZ))$ et on trouve que
\begin{itemize}
\item l'action $\Delta_{X, *}$ est l'identit\'e\,;
\item l'action $(x\times X)_{*}$ se factorise par $H^{1}(\{x\}, \HH^{i}_{\{x\}}(\ZZ))$, or le faisceau $\HH^{i}_{\{x\}}(\ZZ)=0$ par \cite{MR0177422}, pour tout $i\geq 1$\,;
\item l'action $(l\times h)_{*}$ se factorise par $H^{1}(l, \HH^{i}_{l}(\ZZ))$, or le faisceau $\HH^{i}_{l}(\ZZ)=0$ par \cite{MR0177422}, pour tout $i\geq 2$\,;
\item l'action $(Z')_{*}$ se factorise par $H^{-1}(\tilde T, \HH^{i-2}_{\tilde T}(\ZZ))=0$, o\`u $\tilde T$ est une r\'esolution des singularit\'es de $T$, comme dans le d\'ebut de la d\'emonstration de la proposition \ref{prop:DeTorsion}.
\end{itemize}
Par cons\'equent, $H^{1}(X, \HH^{i}(\ZZ))$ est annul\'e par la multiplication par $m$.
\end{proof}

\begin{lem}\label{lem:Hii}
Pour tout $0\leq i\leq 5$, l'application classe $$H^{i}(X, \HH^{i}(\ZZ))\cong \CH^{i}(X)/\alg\to H^{2i}(X, \ZZ)\simeq \ZZ$$ est un isomorphisme.
\end{lem}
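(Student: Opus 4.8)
The plan is to prove this case by case on $i$. By Corollaire \ref{cor:Vanishing} combined with \cite[Corollary 7.4]{MR0412191}, the group $H^{i}(X,\HH^{i}(\ZZ))$ is indeed isomorphic to $\CH^{i}(X)/\!\alg$, the group of codimension $i$ cycles modulo algebraic equivalence, and under this isomorphism the displayed map is the cycle class map, so it suffices to show that the cycle class map $\CH^{i}(X)/\!\alg \to H^{2i}(X,\ZZ)\cong\ZZ$ is an isomorphism for $0\le i\le 5$. For $i=0$ this is clear (the fundamental class generates), and for $i=5$ it follows from $\CH^{5}(X)\cong\ZZ$ being generated by the class of a point, as recalled in the introduction. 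For $i=1$ it is the classical Lefschetz theorem ($\CH^{1}(X)\cong\ZZ\cdot h$), and for $i=4$ it follows from $\CH^{4}(X)\cong\ZZ$ generated by the class of a line together with the fact that all lines on $X$ are algebraically (indeed rationally) equivalent. For $i=2$, the discussion in the introduction (via the Bloch--Srinivas decomposition of the diagonal and Murre's vanishing of the algebraic representative since $H^{3}(X,\ZZ)=0$) gives $\CH^{2}(X)\cong\ZZ\cdot h^{2}$, hence $\CH^{2}(X)/\!\alg\cong\ZZ$ maps isomorphically to $H^{4}(X,\ZZ)$.

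The only genuinely new case is $i=3$, which is the heart of the matter. Here I would argue as follows. Surjectivity is immediate, since $H^{6}(X,\ZZ)\cong\ZZ$ is generated by the class of a plane $\Pi\subset X$, which is algebraic. For injectivity, one must show that the kernel of $\CH^{3}(X)/\!\alg \to H^{6}(X,\ZZ)$ is trivial; but by definition this kernel is exactly $\CH^{3}(X)_{\ho}/\CH^{3}(X)_{\alg}=\Griff^{3}(X)$, which vanishes by the already-established part (1) of Théorème \ref{main} (proved at the end of \S\ref{sect:Griff}). Therefore the cycle class map is injective on $\CH^{3}(X)/\!\alg$, and combined with surjectivity it is an isomorphism.

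The main obstacle is purely bookkeeping: one must make sure that the identification $H^{i}(X,\HH^{i}(\ZZ))\cong\CH^{i}(X)/\!\alg$ via \cite[Corollary 7.4]{MR0412191} is compatible with the topological cycle class map on the one side and the edge map of the coniveau/Leray spectral sequence to $H^{2i}(X,\ZZ)$ on the other. This compatibility is standard (it is the same identification used in the proof of Proposition \ref{prop:Griffiths}), so no real difficulty arises; the substance of the lemma for $i=3$ is entirely absorbed into the vanishing $\Griff^{3}(X)=0$ already proved, and the remaining values of $i$ are recollections from the introduction.
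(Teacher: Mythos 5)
Your proof is correct and follows essentially the same route as the paper: surjectivity comes from the integral Hodge conjecture for $X$ (i.e. the existence of the algebraic generators $1$, $h$, $h^{2}$, a plane, a line, a point), and injectivity is exactly the vanishing of $\Griff^{i}(X)$, proved in \S\ref{sect:Griff} for $i=3$ and recalled from the introduction for the other values of $i$. Your additional remark on the compatibility of the Bloch--Ogus identification with the cycle class map is the same point the paper already uses in the proof of la proposition \ref{prop:Griffiths}.
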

\begin{proof}
La surjectivit\'e est \'equivalente \`a la conjecture de Hodge enti\`ere, qui vaut pour $X$ par l'existence des droites et des plans dans $X$.  L'injectivit\'e est \'equivalente \`a l'annulation du groupe de Griffiths $\Griff^{i}(X)$, qui est prouv\'ee dans \S \ref{sect:Griff} pour $i=3$ et facile pour les autres $i$ (\emph{cf.} \S \ref{sect:Intro}).
\end{proof}

\begin{proof}[D\'emonstration du th\'eor\`eme \ref{main2}]
On regarde la suite spectrale de Leray
$$E_{2}^{p,q}=H^{p}(X, \HH^{q}(\ZZ))\Longrightarrow H^{p+q}(X, \ZZ).$$
En tenant compte les annulations 
\begin{itemize}
\item $H^{p}(X, \HH^{q}(\ZZ))=0$ pour tout $q>\dim X=5$, par \cite{MR0177422}\,;
\item $H^{p}(X, \HH^{q}(\ZZ))=0$ pour tout $p>q$, par \cite[0.3]{MR0412191}\,;
\item $H^{0}(X, \HH^{q}(\ZZ))=0$ pour tout $q>0$, par \cite[Proposition 3.3(i)]{CTVoisin2012IHC}\,;
\item $H^{1}(X, \HH^{q}(\ZZ))=0$ pour tout $q>1$, par le lemme \ref{lem:H1i},
\end{itemize}
on voit que dans cette suite spectrale,  il n'y qu'une seule fl\`eche $d_{2}\colon H^{2}(X, \HH^{5}(\ZZ))\to H^{4}(X, \HH^{4}(\ZZ))$ qui pourrait \^etre non nulle. Montrons que $H^{2}(X, \HH^{5}(\ZZ))$ est un groupe de torsion par un argument similaire \`a celui du lemme \ref{lem:H1i}: on fait agir les deux c\^ot\'es de (\ref{eqn:DD}) sur le groupe $H^{2}(X, \HH^{5}(\ZZ))$:
\begin{itemize}
\item l'action $\Delta_{X}^{*}$ est l'identit\'e\,;
\item l'action $(x\times X)^{*}$ se factorise par $H^{-3}(\{x\}, \HH^{0}_{\{x\}}(\ZZ))=0$\,;
\item l'action $(l\times h)^{*}$ se factorise par $H^{-2}(l, \HH^{1}_{l}(\ZZ))=0$\,;
\item l'action $(Z')^{*}$ se factorise par $H^{2}(\tilde T, \HH^{5}_{\tilde T}(\ZZ))$, o\`u $\tilde T$ est une r\'esolution des singularit\'es de $T$, comme dans le d\'ebut de la d\'emonstration de la proposition \ref{prop:DeTorsion}. Or le faisceau $\HH^{5}_{\tilde T}(\ZZ)=0$ par \cite{MR0177422}, car $\dim \tilde T=3<5$.
\end{itemize}
Par cons\'equent, $H^{2}(X, \HH^{5}(\ZZ))$ est bien de torsion et donc n'admet pas de morphisme non nul vers le groupe ab\'elien libre $H^{4}(X, \HH^{4}(\ZZ))$. Par cons\'equent, la suite spectrale est d\'eg\'en\'er\'ee en $E_{2}$. La partie (3) du th\'eor\`eme \ref{main2} est d\'emontr\'ee (\emph{cf.} Proposition \ref{prop:TwoSS}).

La d\'eg\'en\'erescence de la suite spectrale \'etant prouv\'ee, le lemme \ref{lem:Hii}, avec les annulations ci-dessus, implique la partie (1) du th\'eor\`eme \ref{main2}.

Finalement, la partie (2) du th\'eor\`eme \ref{main2} d\'ecoule de la partie (1), en particulier du fait que tous les termes de la suite spectrale pour le faisceau $\ZZ$ sont sans torsion, et du lemme \ref{lemma:SES}.
\end{proof}

\bibliographystyle{plain}
\bibliography{biblio.bib}

\end{document}